\renewcommand{\email}[1]{\emailname: #1} 
\renewenvironment{proof}{\noindent{\itshape Proof.}}{\smartqed\qed}
\DeclareSymbolFont{bbold}{U}{bbold}{m}{n}
\DeclareSymbolFontAlphabet{\mathbbold}{bbold}
\begin{document}
\title*{Exponential Sum Approximations for $t^{-\beta}$}
\titlerunning{Exponential Sum Approximations}
\author{William McLean}
\institute{William McLean (\Letter)
\at School of Mathematics and Statistics, 
The University of New South Wales, Sydney 2052, AUSTRALIA\\
\email{w.mclean@unsw.edu.au}}
\maketitle
\index{McLean, William}
\paragraph{Dedicated to Ian H.~Sloan on the occasion of his 80th birthday.}
\abstract{Given $\beta>0$~and $\delta>0$, the function~$t^{-\beta}$ 
may be approximated for~$t$ in a compact interval~$[\delta,T]$ by a 
sum of terms of the form~$w\E^{-at}$, with parameters $w>0$~and 
$a>0$. One such an approximation, studied by Beylkin~and
Monz\'on~\cite{BeylkinMonzon2010}, is obtained by applying the 
trapezoidal rule to an integral representation of~$t^{-\beta}$, after 
which Prony's method is applied to reduce the number of terms in 
the sum with essentially no loss of accuracy.  We review this method, 
and then describe a similar approach based on an alternative integral 
representation.  The main difference is that the new approach 
achieves much better results \emph{before} the application of Prony's 
method; after applying Prony's method the performance of both is much 
the same.}
\section{Introduction}
Consider a Volterra operator with a convolution kernel,
\begin{equation}\label{eq: Ku}
\mathcal{K}u(t)=(k*u)(t)=\int_0^t k(t-s)u(s)\,ds\quad\text{for $t>0$,}
\end{equation}
and suppose that we seek a numerical approximation to~$\mathcal{K}u$ 
at the points of a grid~$0=t_0<t_1<t_2<\cdots<t_{N_t}=T$.  For 
example, if we know~$U^n\approx u(t_n)$ and define (for simplicity) a 
piecewise-constant interpolant~$\tilde U(t)=U^n$ 
for~$t\in I_n=(t_{n-1},t_n)$, then
\[
\mathcal{K}u(t_n)\approx\mathcal{K}\tilde U(t_n)
	=\sum_{j=1}^n\omega_{nj}U^j\quad\text{where}\quad
	\omega_{nj}=\int_{I_j}k(t_n-s)\,ds.
\]
The number of operations required to compute this sum in the obvious 
way for~$1\le n\le N_t$ is proportional 
to~$\sum_{n=1}^{N_t}n\approx N_t^2/2$, and this quadratic growth can 
be prohibitive in applications where each $U^j$ is a large vector and 
not just a scalar.  Moreover, it might not be possible to store $U^j$ 
in active memory for all time levels~$j$.

These problems can be avoided using a simple, fast algorithm if the 
kernel~$k$ admits an exponential sum approximation
\begin{equation}\label{eq: general k(t)}
k(t)\approx\sum_{l=1}^L w_le^{b_lt}\quad\text{for $\delta\le t\le T$,}
\end{equation}
provided sufficient accuracy is achieved using only a moderate number 
of terms~$L$, for a choice of~$\delta>0$ that is smaller than the 
time step~$\Delta t_n=t_n-t_{n-1}$ for all~$n$.  Indeed, 
if~$\Delta t_n\ge\delta$ then $\delta\le t_n-s\le T$ 
for~$0\le s\le t_{n-1}$ so
\[
\sum_{j=1}^{n-1}\omega_{nj}U^j=\int_0^{t_{n-1}}k(t_n-s)\tilde U(s)\,ds
	\approx\int_0^{t_{n-1}}\sum_{l=1}^Lw_le^{b_l(t_n-s)}
		\tilde U(s)\,ds=\sum_{l=1}^L\Theta^n_l,
\]
where
\[
\Theta^n_l=w_l\int_0^{t_{n-1}}e^{b_l(t_n-s)}\tilde U(s)\,ds
	=\sum_{j=1}^{n-1}\kappa_{lnj}U^j
\quad\text{and}\quad
\kappa_{lnj}=w_l\int_{I_j}e^{b_l(t_n-s)}\,ds.
\]
Thus,
\begin{equation}\label{eq: fast KU}
\mathcal{K}\tilde U(t_n)\approx\omega_{nn}U^n+\sum_{l=1}^L\Theta^n_l,
\end{equation}
and by using the recursive formula
\[
\Theta^n_l=\kappa_{ln,n-1}U^{n-1}+e^{b_l\Delta t_n}\Theta^{n-1}_l
\quad\text{for $n\ge2$,}\quad\text{with $\Theta^1_l=0$,}
\]
we can evaluate $\mathcal{K}\tilde U(t_n)$ to an acceptable accuracy 
with a number of operations proportional to~$LN_t$ --- a substantial 
saving if $L\ll N_t$.  In addition, we may overwrite 
$\Theta^{n-1}_l$~with $\Theta^n_l$, and overwrite $U^{n-1}$ 
with~$U^n$, so that the active storage requirement is proportional 
to~$L$ instead of~$N_t$.

In the present work, we study two exponential sum approximations 
to the kernel~$k(t)=t^{-\beta}$ with~$\beta>0$.  Our starting point 
is the integral representation
\begin{equation}\label{eq: basic repn}
\frac{1}{t^\beta}=\frac{1}{\Gamma(\beta)}\int_0^\infty e^{-pt}p^\beta
	\,\frac{dp}{p}\quad\text{for $t>0$ and $\beta>0$,}
\end{equation}
which follows easily from the integral definition of the Gamma 
function via the substitution~$p=y/t$ (if $y$ is the original 
integration variable).  Section~\ref{sec: BM approx} discusses the 
results of Beylkin and Monz\'on~\cite{BeylkinMonzon2010}, who used the 
substitution~$p=e^x$ in~\eqref{eq: basic repn} to obtain
\begin{equation}\label{eq: old repn}
\frac{1}{t^\beta}=\frac{1}{\Gamma(\beta)}\int_{-\infty}^\infty
	\exp(-te^x+\beta x)\,dx.
\end{equation}
Applying the infinite trapezoidal rule with step size~$h>0$ leads to 
the approximation
\begin{equation}\label{eq: infinite trap}
\frac{1}{t^\beta}\approx\frac{1}{\Gamma(\beta)}\sum_{n=-\infty}^\infty
	w_ne^{-a_nt}
\end{equation}
where
\begin{equation}\label{eq: BM exponents and weights}
a_n=e^{hn}\quad\text{and}\quad w_n=he^{\beta nh}.
\end{equation}
We will see that the relative error,
\begin{equation}\label{eq: infinite rel err}
\rho(t)=1-\frac{t^\beta}{\Gamma(\beta)}\sum_{n=-\infty}^\infty
	w_ne^{-a_nt},
\end{equation}
satisfies a uniform bound for~$0<t<\infty$.  If $t$ is restricted to a 
compact interval~$[\delta,T]$ with~$0<\delta<T<\infty$, then we can 
similarly bound the relative error in the \emph{finite} exponential 
sum approximation
\begin{equation}\label{eq: finite trap}
\frac{1}{t^\beta}\approx\frac{1}{\Gamma(\beta)}\sum_{n=-M}^N
	w_ne^{-a_nt}\quad\text{for $\delta\le t\le T$,}
\end{equation}
for suitable choices of $M>0$~and $N>0$.

The exponents~$a_n=e^{nh}$ in the sum~\eqref{eq: finite trap} tend to 
zero as~$n\to-\infty$.  In Section~\ref{sec: Prony} we see how, for
a suitable threshold exponent size~$a^*$, Prony's method may be used
to replace $\sum_{a_n\le a^*}w_ne^{-a_nt}$ with an exponential sum 
having fewer terms.  This idea again follows Beylkin and 
Monz\'on~\cite{BeylkinMonzon2010}, who discussed it in the context of 
approximation by Gaussian sums.

\begin{figure}
\caption{Top: the integrand from~\eqref{eq: new repn} 
when~$\beta=1/2$ for different~$t$. Bottom: comparison between the
integrands from \eqref{eq: old repn}~and \eqref{eq: new repn}
when~$t=0.001$; the dashed line is the former and the solid line the 
latter.}
\label{fig: integrands}
\begin{center}
\includegraphics[scale=0.5]{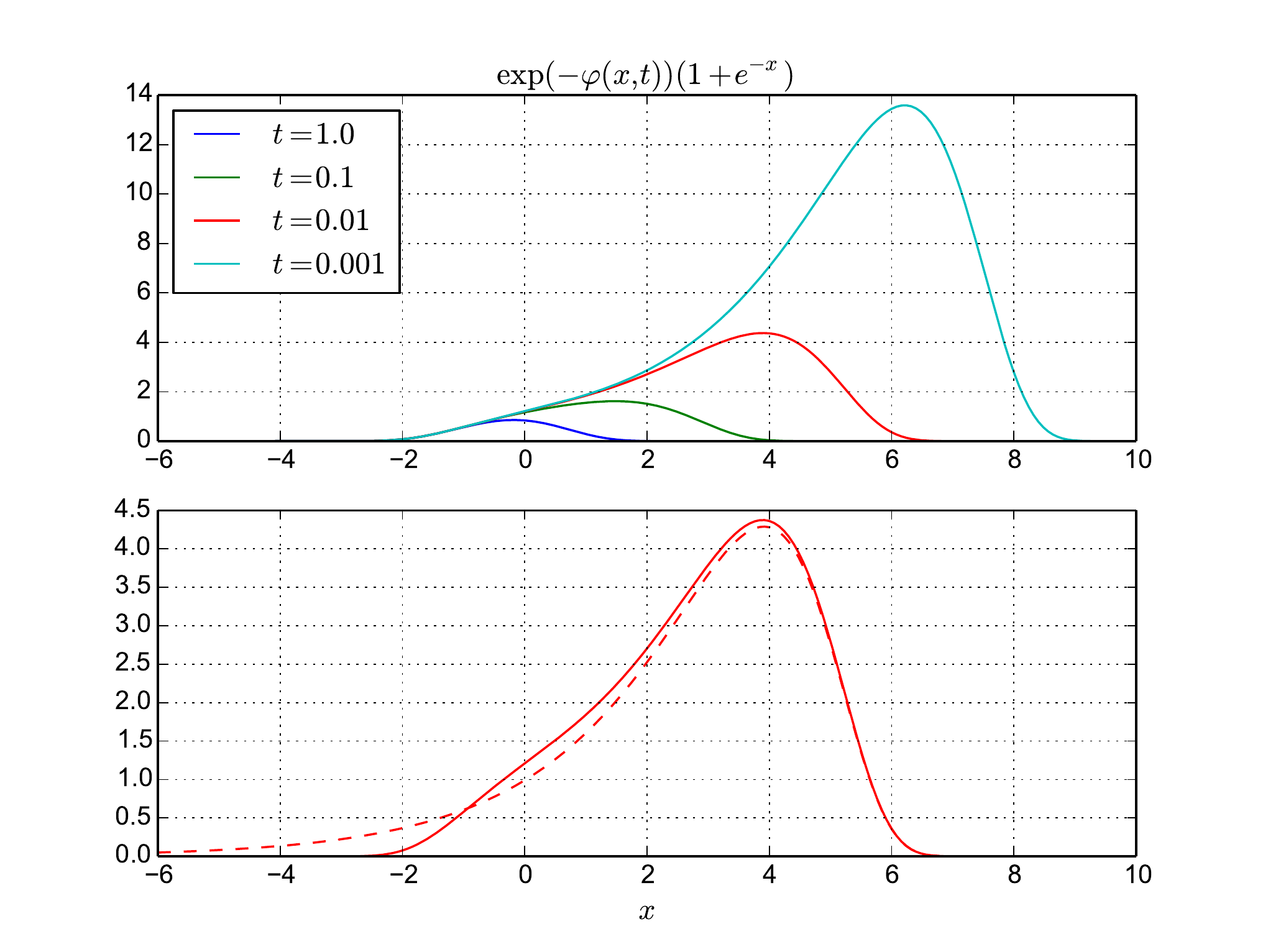} 
\end{center}
\end{figure}

Section~\ref{sec: new approx} introduces an alternative approach 
based on the substitution~$p=\exp(x-e^{-x})$, which 
transforms~\eqref{eq: basic repn} into the integral representation
\begin{equation}\label{eq: new repn}
\frac{1}{t^\beta}=\frac{1}{\Gamma(\beta)}\int_{-\infty}^\infty
	\exp\bigl(-\varphi(x,t)\bigr)(1+e^{-x})\,dx,
\end{equation}
where
\begin{equation}\label{eq: phi(x,t)}
\varphi(x,t)=tp-\beta\log p=t\exp(x-e^{-x})-\beta(x-e^{-x}).
\end{equation}
Applying the infinite trapezoidal rule again leads to an 
approximation of the form~\eqref{eq: infinite trap}, this time with
\begin{equation}\label{eq: new exponents and weights}
a_n=\exp\bigl(nh-e^{-nh}\bigr)\quad\text{and}\quad 
w_n=h(1+e^{-nh})\exp\bigl(\beta(nh-e^{-nh})\bigr).
\end{equation}
As~$x\to\infty$, the integrands in both \eqref{eq: old repn}~and 
\eqref{eq: new repn} decay like $\exp(-te^x)$.  However, they
exhibit different behaviours as~$x\to-\infty$, with the former 
decaying like~$e^{\beta x}=e^{-\beta|x|}$ whereas the latter decays 
much faster, like~$\exp(-\beta e^{-x})=\exp(-\beta e^{|x|})$, as
seen in Figure~\ref{fig: integrands} (note the differing scales
on the vertical axis).

Li~\cite{Li2010} summarised several alternative approaches for fast 
evaluation of a fractional integral of order~$\alpha$, that is, for
an integral operator of the form~\eqref{eq: Ku} with kernel
\begin{equation}\label{eq: k(t) frac int}
k(t)=\frac{t^{\alpha-1}}{\Gamma(\alpha)}=\frac{\sin\pi\alpha}{\pi}
	\int_0^\infty e^{-pt}p^{-\alpha}\,dp
	\quad\text{for $0<\alpha<1$ and $t>0$,}
\end{equation}
where the integral representation follows from~\eqref{eq: basic repn},
with~$\beta=1-\alpha$, and the reflection formula for the 
Gamma function, $\Gamma(\alpha)\Gamma(1-\alpha)=\pi/\sin\pi\alpha$.
She developed a quadrature approximation,
\begin{equation}\label{eq: Li quad}
\int_0^\infty e^{-pt}p^{-\alpha}\,dp\approx\sum_{j=1}^Q
	w_je^{-p_jt}p_j^{-\alpha}\quad\text{for $\delta\le t<\infty$,}
\end{equation}
which again provides an exponential sum approximation, and showed 
that the error can be made smaller than $\epsilon$ for 
all~$t\in[\delta,\infty)$ with~$Q$ of 
order~$(\log\epsilon^{-1}+\log\delta^{-1})^2$.  

More recently, Jiang et al.~\cite{JiangEtAl2017} developed an 
exponential sum approximation for $t\in[\delta,T]$ using 
composite Gauss quadrature on dyadic intervals, applied 
to~\eqref{eq: old repn}, with~$Q$ of order
\[
(\log\epsilon^{-1})\log\bigl(T\delta^{-1}\log\epsilon^{-1}\bigr)
	+(\log\delta^{-1})\log\bigl(\delta^{-1}\log\epsilon^{-1}\bigr).
\]
In other applications, the kernel~$k(t)$ is known via its Laplace 
transform,
\[
\hat k(z)=\int_0^\infty e^{-zt}k(t)\,dt,
\]
so that instead of the exponential sum~\eqref{eq: general k(t)} it is 
natural to seek a sum-of-poles approximation,
\[
\hat k(z)\approx\sum_{l=1}^L\frac{w_l}{z-b_l}
\]
for~$z$ in a suitable region of the complex plane; see, for instance,
Alpert, Greengard and Hagstrom~\cite{AlpertGreengardHagstrom2000}
and Xu~and Jian~\cite{XuJiang2013}.
\section{Approximation based on the substitution $p=e^x$}
\label{sec: BM approx}

The nature of the approximation~\eqref{eq: infinite trap} is revealed 
by a remarkable formula for the relative 
error~\cite[Section~2]{BeylkinMonzon2010}.  For completeness, we 
outline the proof.

\begin{theorem}\label{thm: BM rel discr error}
If the exponents and weights are given 
by~\eqref{eq: BM exponents and weights}, then the relative 
error~\eqref{eq: infinite rel err} has the representation
\begin{equation}\label{eq: rho sum}
\rho(t)=-2\sum_{n=1}^\infty R(n/h)\cos\bigl(2\pi(n/h)\log t
	-\Phi(n/h)\bigr)
\end{equation}
where $R(\xi)$~and $\Phi(\xi)$ are the real-valued functions defined 
by 
\[
\frac{\Gamma(\beta+i2\pi\xi)}{\Gamma(\beta)}=R(\xi)e^{i\Phi(\xi)}
\quad\text{with $R(\xi)>0$ and $\Phi(0)=0$.}
\]
Moreover, $R(\xi)\le e^{-2\pi\theta|\xi|}/(\cos\theta)^\beta$
for $0\le\theta<\pi/2$ and $-\infty<\xi<\infty$.
\end{theorem}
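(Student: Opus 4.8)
The plan is to recognise the right-hand side of~\eqref{eq: infinite trap} as the infinite trapezoidal-rule approximation of the integral~\eqref{eq: old repn} and to compute the quadrature error by Poisson summation. Fix $t>0$ and set $g(x)=\exp(-te^x+\beta x)$, so that $\sum_{n}w_ne^{-a_nt}=h\sum_{n=-\infty}^\infty g(nh)$. Since $g(x)$ behaves like $e^{\beta x}$ as $x\to-\infty$ and like $e^{-te^x}$ as $x\to+\infty$, the function $g$ is smooth and integrable and (as the next step confirms) so is $\hat g$ with rapid decay, so the Poisson summation formula applies and gives
\[
h\sum_{n=-\infty}^\infty g(nh)=\sum_{k=-\infty}^\infty\hat g(k/h),\qquad
\hat g(\xi)=\int_{-\infty}^\infty g(x)e^{-2\pi i\xi x}\,dx .
\]

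Next I would evaluate $\hat g$. The substitution $u=te^x$ converts the defining integral into a Gamma integral, giving $\hat g(\xi)=t^{-\beta+2\pi i\xi}\,\Gamma(\beta-2\pi i\xi)$; in particular $\hat g(0)=t^{-\beta}\Gamma(\beta)$, consistent with~\eqref{eq: old repn}. Multiplying the Poisson identity by $t^\beta/\Gamma(\beta)$ and isolating the $k=0$ term yields
\[
1-\rho(t)=1+\sum_{k\ne0}t^{2\pi ik/h}\,\frac{\Gamma(\beta-2\pi ik/h)}{\Gamma(\beta)},
\]
hence $\rho(t)=-\sum_{k\ne0}t^{2\pi ik/h}\Gamma(\beta-2\pi ik/h)/\Gamma(\beta)$. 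Pairing the indices $\pm k$ and using $\Gamma(\bar z)=\overline{\Gamma(z)}$, each pair collapses to twice the real part of $t^{-2\pi ik/h}\Gamma(\beta+2\pi ik/h)/\Gamma(\beta)$; writing $\Gamma(\beta+2\pi i\xi)/\Gamma(\beta)=R(\xi)e^{i\Phi(\xi)}$ and $t^{-2\pi ik/h}=e^{-2\pi i(k/h)\log t}$ turns that real part into $R(k/h)\cos\bigl(2\pi(k/h)\log t-\Phi(k/h)\bigr)$, which is~\eqref{eq: rho sum}.

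For the stated bound on $R$, the idea is to rotate the contour in $\Gamma(z)=\int_0^\infty e^{-u}u^{z-1}\,du$ onto the ray $u=re^{i\phi}$ with $|\phi|<\pi/2$; a routine estimate of the connecting arcs near $0$ and near $\infty$ shows this is legitimate when $\Re z=\beta>0$, so that with $z=\beta+is$,
\[
\Gamma(\beta+is)=e^{i\phi\beta}e^{-\phi s}\int_0^\infty e^{-r\cos\phi}\,e^{-ir\sin\phi}\,r^{\beta-1}r^{is}\,dr ,
\]
and therefore $|\Gamma(\beta+is)|\le e^{-\phi s}\int_0^\infty e^{-r\cos\phi}r^{\beta-1}\,dr=e^{-\phi s}\,\Gamma(\beta)/(\cos\phi)^\beta$. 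Taking $\phi=\theta$ when $s\ge0$ and $\phi=-\theta$ when $s<0$, and recalling $s=2\pi\xi$, gives $R(\xi)=|\Gamma(\beta+2\pi i\xi)|/\Gamma(\beta)\le e^{-2\pi\theta|\xi|}/(\cos\theta)^\beta$ for every $0\le\theta<\pi/2$.

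I expect the only real work to be in verifying the technical hypotheses rather than in any new idea: one must confirm a sufficient condition for Poisson summation — e.g.\ absolute convergence of $\sum_k\hat g(k/h)$, which follows at once from the Gamma-function decay just derived — and must justify the contour rotation by bounding the circular arcs. Both are standard, and everything else reduces to the substitution $u=te^x$ and the reflection symmetry of $\Gamma$.
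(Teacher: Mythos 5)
Your proposal is correct and follows essentially the same route as the paper: Poisson summation applied to $f(x)=\exp(-te^x+\beta x)$, evaluation of the Fourier transform as $t^{-\beta+2\pi i\xi}\Gamma(\beta-2\pi i\xi)$ via the substitution $u=te^x$, pairing of the $\pm k$ terms using $\overline{\Gamma(z)}=\Gamma(\bar z)$, and a contour rotation onto the ray $\arg u=\pm\theta$ for the bound on $R(\xi)$. The only differences are cosmetic (the paper justifies Poisson summation by noting $f$ is Schwartz, and treats only $y>0$ in the contour argument, appealing to $R(-\xi)=R(\xi)$).
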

\begin{proof}
For each~$t>0$, the integrand~$f(x)=\exp(-te^x+\beta x)$ 
from~\eqref{eq: old repn} belongs to the Schwarz class of rapidly 
decreasing $C^\infty$~functions, and we may therefore apply the 
Poisson summation formula to conclude that
\[
h\sum_{n=-\infty}^\infty f(nh)=\sum_{n=-\infty}^\infty\tilde f(n/h)
	=\int_{-\infty}^\infty f(x)\,dx+\sum_{n\ne0}\tilde f(n/h),
\]
where the Fourier transform of~$f$ is 
\[
\tilde f(\xi)=\int_{-\infty}^\infty e^{-i2\pi\xi x}f(x)\,dx
	=\int_{-\infty}^\infty\exp\bigl(-te^x+(\beta-i2\pi\xi)x\bigr)\,dx.
\]
The substitution~$p=te^x$ gives
\[
\tilde f(\xi)=\frac{1}{t^{\beta-i2\pi\xi}}\int_0^\infty 
	e^{-p}p^{\beta-i2\pi\xi}\,\frac{dp}{p}
	=\frac{\Gamma(\beta-i2\pi\xi)}{t^{\beta-i2\pi\xi}},
\]
so, with $a_n$~and $w_n$ defined 
by~\eqref{eq: BM exponents and weights},
\[
\frac{1}{\Gamma(\beta)}
	\sum_{n=-\infty}^\infty w_ne^{-a_nt}
	=\frac{1}{t^\beta}+\frac{1}{t^\beta}\sum_{n\ne0}
	\frac{\Gamma(\beta-i2\pi n/h)}{\Gamma(\beta)}\,t^{i2\pi n/h}.
\]
The formula for~$\rho(t)$ follows after noting that 
$\overline{\Gamma(\beta+i2\pi\xi)}=\Gamma(\beta-i2\pi\xi)$ 
for all real~$\xi$; hence, $R(-\xi)=R(\xi)$~and 
$\Phi(-\xi)=-\Phi(\xi)$.

To estimate~$R(\xi)$, let $y>0$ and define the 
ray~$\mathcal{C}_\theta=\{\,se^{i\theta}:0<s<\infty\,\}$.  By Cauchy's 
theorem,
\[
\Gamma(\beta+iy)=\int_{\mathcal{C}_\theta}e^{-p}p^{\beta+iy}\,\frac{dp}{p}
	=\int_0^\infty e^{-se^{i\theta}}(se^{i\theta})^{\beta+iy}\,
		\frac{ds}{s}
\]
and thus
\[
|\Gamma(\beta+iy)|\le\int_0^\infty e^{-s\cos\theta}e^{-\theta y}
	s^\beta\,\frac{ds}{s}=\frac{e^{-\theta y}}{(\cos\theta)^\beta}
	\int_0^\infty e^{-s}s^\beta\,\frac{ds}{s}
	=\frac{e^{-\theta y}}{(\cos\theta)^\beta}\,\Gamma(\beta),
\]
implying the desired bound for~$R(\xi)$.
\end{proof}

In practice, the amplitudes~$R(n/h)$ decay so rapidly with~$n$ that 
only the first term in the expansion~\eqref{eq: rho sum} is 
significant.  For instance, since~\cite[6.1.30]{AbramowitzStegun1965}
\[
\bigl|\Gamma(\tfrac12+iy)\bigr|^2=\frac{\pi}{\cosh(\pi y)},
\]
if $\beta=1/2$ then 
$R(\xi)=(\cosh2\pi^2\xi)^{-1/2}\le\sqrt{2}e^{-\pi^2\xi}$ so,
choosing~$h=1/3$, we have $R(1/h)=1.95692\times10^{-13}$~and
$R(2/h)=2.70786\times10^{-26}$.  In general,
the bound~$R(n/h)\le e^{-2\pi\theta n/h}/(\cos\theta)^\beta$ from 
the theorem is minimized by choosing $\tan\theta=2\pi n/(\beta h)$,
implying that
\[
R(n/h)\le\bigl(1+(r_n/\beta)^2\bigr)^{\beta/2}
	\exp\bigl(-r_n\arctan(r_n/\beta)\bigr)
	\quad\text{where $r_n=2\pi n/h$.}
\]

Since we can evaluate only a \emph{finite} exponential sum, we now 
estimate the two tails of the infinite sum in terms of the upper 
incomplete Gamma function,
\[
\Gamma(\beta,q)=\int_q^\infty e^{-p}p^\beta\,\frac{dp}{p}
	\quad\text{for $\beta>0$~and $q>0$.}
\]

\begin{theorem}\label{thm: BM rel trunc error}
If the exponents and weights are given 
by~\eqref{eq: BM exponents and weights}, then
\[
t^\beta\sum_{n=N+1}^\infty w_ne^{-a_nt}\le\Gamma(\beta,te^{Nh})
	\quad\text{provided $te^{Nh}\ge\beta$,}
\]
and
\[
t^\beta\sum_{n=-\infty}^{-M-1}w_ne^{-a_nt}\le\Gamma(\beta)
	-\Gamma(\beta,te^{-Mh})
	\quad\text{provided $te^{-Mh}\le\beta$.}
\]
\end{theorem}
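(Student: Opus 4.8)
The plan is to recognise the summand as a sampled value of the integrand from~\eqref{eq: old repn} and then compare the sum with the integral using monotonicity. Write $f(x)=\exp(-te^x+\beta x)$, so that $w_ne^{-a_nt}=he^{\beta nh}\exp(-te^{nh})=hf(nh)$. Differentiating, $f'(x)=(\beta-te^x)f(x)$, so $f$ is nonincreasing on the half-line $\{x:te^x\ge\beta\}=[\log(\beta/t),\infty)$ and nondecreasing on $(-\infty,\log(\beta/t)]$. Both tails of the infinite trapezoidal sum therefore lie in a region where $f$ is monotone, which is exactly what the two side conditions $te^{Nh}\ge\beta$ and $te^{-Mh}\le\beta$ guarantee.

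For the right tail, the hypothesis $te^{Nh}\ge\beta$ places $[Nh,\infty)$ inside the region where $f$ is nonincreasing, so for every $n\ge N$ and every $x\in[nh,(n+1)h]$ we have $f((n+1)h)\le f(x)$, whence $hf((n+1)h)\le\int_{nh}^{(n+1)h}f(x)\,dx$. Summing over $n\ge N$ gives
\[
t^\beta\sum_{n=N+1}^\infty w_ne^{-a_nt}=t^\beta h\sum_{n=N+1}^\infty f(nh)
\le t^\beta\int_{Nh}^\infty f(x)\,dx.
\]
The substitution $p=te^x$ (already used in the proof of Theorem~\ref{thm: BM rel discr error}) turns the right-hand integral into $t^{-\beta}\int_{te^{Nh}}^\infty e^{-p}p^\beta\,dp/p=t^{-\beta}\Gamma(\beta,te^{Nh})$, and the first bound follows.

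For the left tail, the hypothesis $te^{-Mh}\le\beta$ places $(-\infty,-Mh]$ inside the region where $f$ is nondecreasing, so for every $n\le-M-1$ and every $x\in[nh,(n+1)h]$ we have $f(nh)\le f(x)$, whence $hf(nh)\le\int_{nh}^{(n+1)h}f(x)\,dx$. Summing over $n\le-M-1$ gives
\[
t^\beta\sum_{n=-\infty}^{-M-1}w_ne^{-a_nt}\le t^\beta\int_{-\infty}^{-Mh}f(x)\,dx
=\int_0^{te^{-Mh}}e^{-p}p^\beta\,\frac{dp}{p}=\Gamma(\beta)-\Gamma(\beta,te^{-Mh}),
\]
using the same substitution together with $\int_0^\infty e^{-p}p^\beta\,dp/p=\Gamma(\beta)$.

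I do not anticipate a genuine obstacle: the argument is the standard comparison of a Riemann sum with its integral for a monotone function, combined with a substitution that has already appeared. The only point needing care is the bookkeeping of the indices --- checking that each side condition pins down monotonicity on precisely the half-line required, and that on each subinterval $[nh,(n+1)h]$ one brackets by the correct endpoint ($f((n+1)h)$ for the decreasing tail, $f(nh)$ for the increasing tail) so that the telescoped sum of integrals matches $\int_{Nh}^\infty$ or $\int_{-\infty}^{-Mh}$ exactly.
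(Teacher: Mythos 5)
Your proof is correct and follows essentially the same route as the paper: identify $w_ne^{-a_nt}=hf(nh)$ with $f(x)=\exp(-te^x+\beta x)$, use the monotonicity of $f$ on each tail (guaranteed by the side conditions $te^{Nh}\ge\beta$ and $te^{-Mh}\le\beta$) to bound the sum by $\int_{Nh}^\infty f$ and $\int_{-\infty}^{-Mh}f$ respectively, and finish with the substitution $p=te^x$. The only cosmetic difference is that the paper handles the left tail by reflecting to $f(-x)$ and substituting $p=te^{-x}$, whereas you work with $f$ directly; the index bookkeeping you carry out explicitly is exactly what the paper's one-line sum-versus-integral comparison compresses.
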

\begin{proof}
For each~$t>0$, the integrand~$f(x)=\exp(-te^x+\beta x)$ 
from~\eqref{eq: old repn} decreases for~$x>\log(\beta/t)$.  Therefore,
if $Nh\ge\log(\beta/t)$, that is, if $te^{Nh}\ge\beta$, then
\[
t^\beta h\sum_{n=N+1}^\infty f(nh)\le t^\beta\int_{Nh}^\infty f(x)\,dx
	=\int_{te^{Nh}}^\infty e^{-p}p^\beta\,\frac{dp}{p}
		=\Gamma(\beta,te^{Nh}),
\]
where, in the final step, we used the substitution~$p=te^x$.  
Similarly, the function~$f(-x)=\exp(-te^{-x}-\beta x)$ decreases 
for~$x>\log(t/\beta)$ so if $Mh\ge\log(t/\beta)$, that is, if
$te^{-Mh}\le\beta$, then
\[
t^\beta h\sum_{n=-\infty}^{-M-1}f(nh)
	=t^\beta h\sum_{n=M+1}^\infty f(-nh)
	\le t^\beta\int_{Mh}^\infty f(-x)\,dx
	=\int_0^{te^{-Mh}}e^{-p}p^\beta\,\frac{dp}{p},
\]
where, in the final step, we used the substitution~$p=te^{-x}$.
\end{proof}

Given $\epsilon_{\text{RD}}>0$ there exists~$h>0$ such that
\begin{equation}\label{eq: rel discr error}
2\sum_{n=1}^\infty|\Gamma(\beta+i2\pi n/h)|
	=\epsilon_{\text{RD}}\Gamma(\beta),
\end{equation}
and by Theorem~\ref{thm: BM rel discr error},
\[
|\rho(t)|\le\epsilon_{\text{RD}}\quad\text{for $0<t<\infty$,}
\]
so $\epsilon_{\text{RD}}$ is an upper bound for the \emph{relative 
discretization} error.  Similarly, given a sufficiently 
small~$\epsilon_{\text{RT}}>0$, there exist $x_\delta>0$~and 
$X_T>0$ such that $\delta e^{x_\delta}\ge\beta$~and 
$Te^{-X_T}\le\beta$ with
\begin{equation}\label{eq: rel trunc error}
\Gamma(\beta,\delta e^{x_\delta})=\epsilon_{\text{RT}}\Gamma(\beta)
\quad\text{and}\quad
\Gamma(\beta)-\Gamma(\beta,Te^{-X_T})
	=\epsilon_{\text{RT}}\Gamma(\beta).
\end{equation}
Thus, by Theorem~\ref{thm: BM rel trunc error}, 
\[
\frac{t^\beta}{\Gamma(\beta)}\sum_{n=N+1}^\infty w_ne^{-a_nt}
	\le\epsilon_{\text{RT}}
	\quad\text{for $t\ge\delta$~and $Nh\ge x_\delta$,}
\]
and
\[
\frac{t^\beta}{\Gamma(\beta)}\sum_{n=-\infty}^{-M-1}w_ne^{-a_nt}
	\le\epsilon_{\text{RT}}
	\quad\text{for $t\le T$ and $Mh\ge X_T$,}
\]
showing that $2\epsilon_{\text{RT}}$ is an upper bound for the 
\emph{relative truncation} error.  Denoting the overall
relative error for the finite sum~\eqref{eq: finite trap} by
\begin{equation}\label{eq: rho M N}
\rho^N_M(t)=1-\frac{t^\beta}{\Gamma(\beta)}\sum_{n=-M}^Nw_ne^{-a_nt},
\end{equation}
we therefore have
\begin{equation}\label{eq: overall rel error}
|\rho^N_M(t)|\le\epsilon_{\text{RD}}+2\epsilon_{\text{RT}}
\quad\text{for $\delta\le t\le T$, $Nh\ge x_\delta$ and $Mh\ge X_T$.}
\end{equation}

\begin{figure}
\caption{The bound~$\epsilon_{\text{RD}}$ for the
relative discretization error, defined by~\eqref{eq: rel discr error},
as a function of~$1/h$ for various choices of~$\beta$.}
\label{fig: rel discr error}
\begin{center}
\includegraphics[scale=0.5]{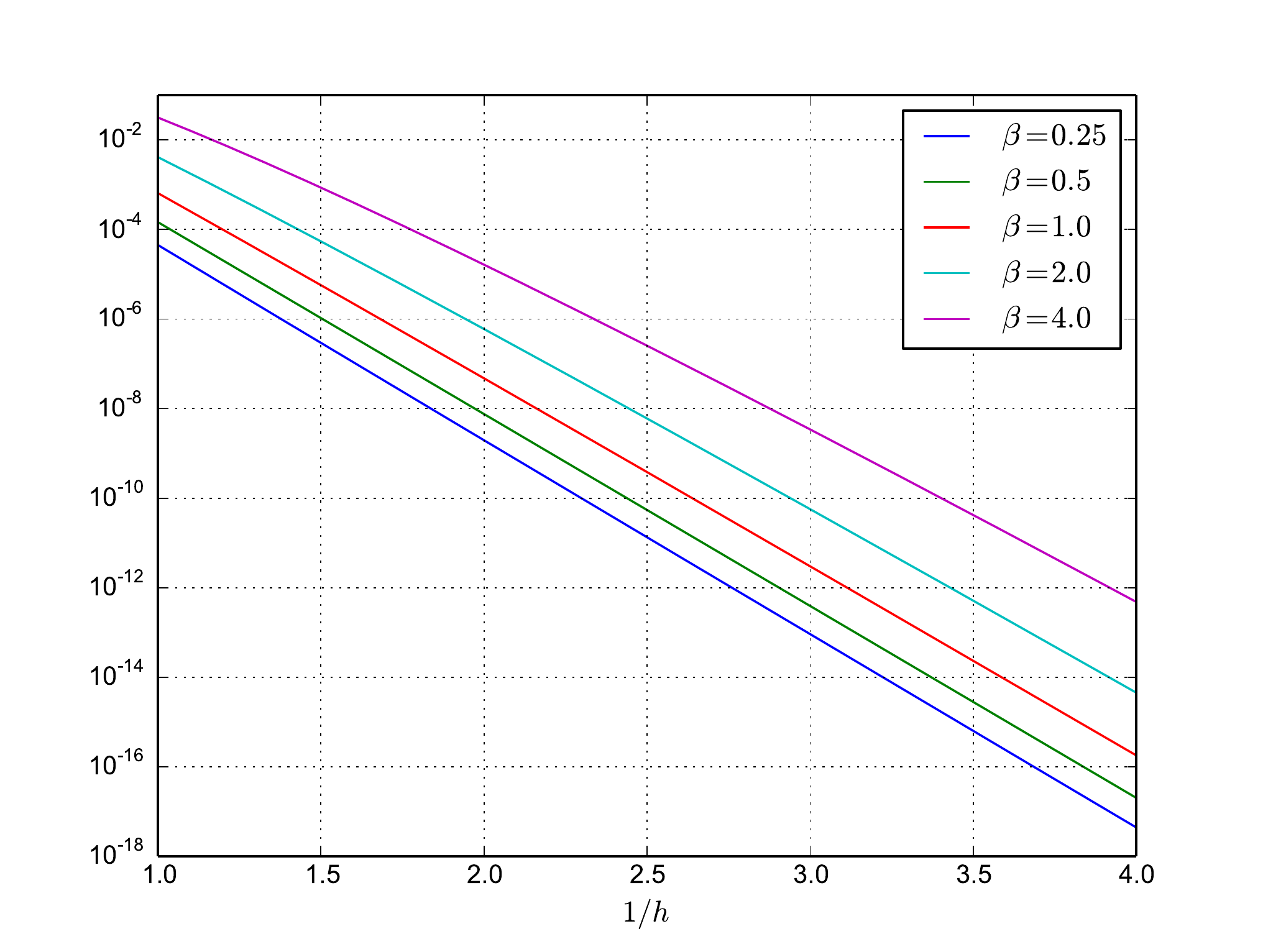}
\end{center}
\end{figure}

\begin{figure}
\caption{The growth in $M$~and $N$ as the upper bound for the overall 
relative error~\eqref{eq: rho M N} decreases, for different choices of 
$T$~and $\delta$.}
\label{fig: M N}
\begin{center}
\includegraphics[scale=0.5]{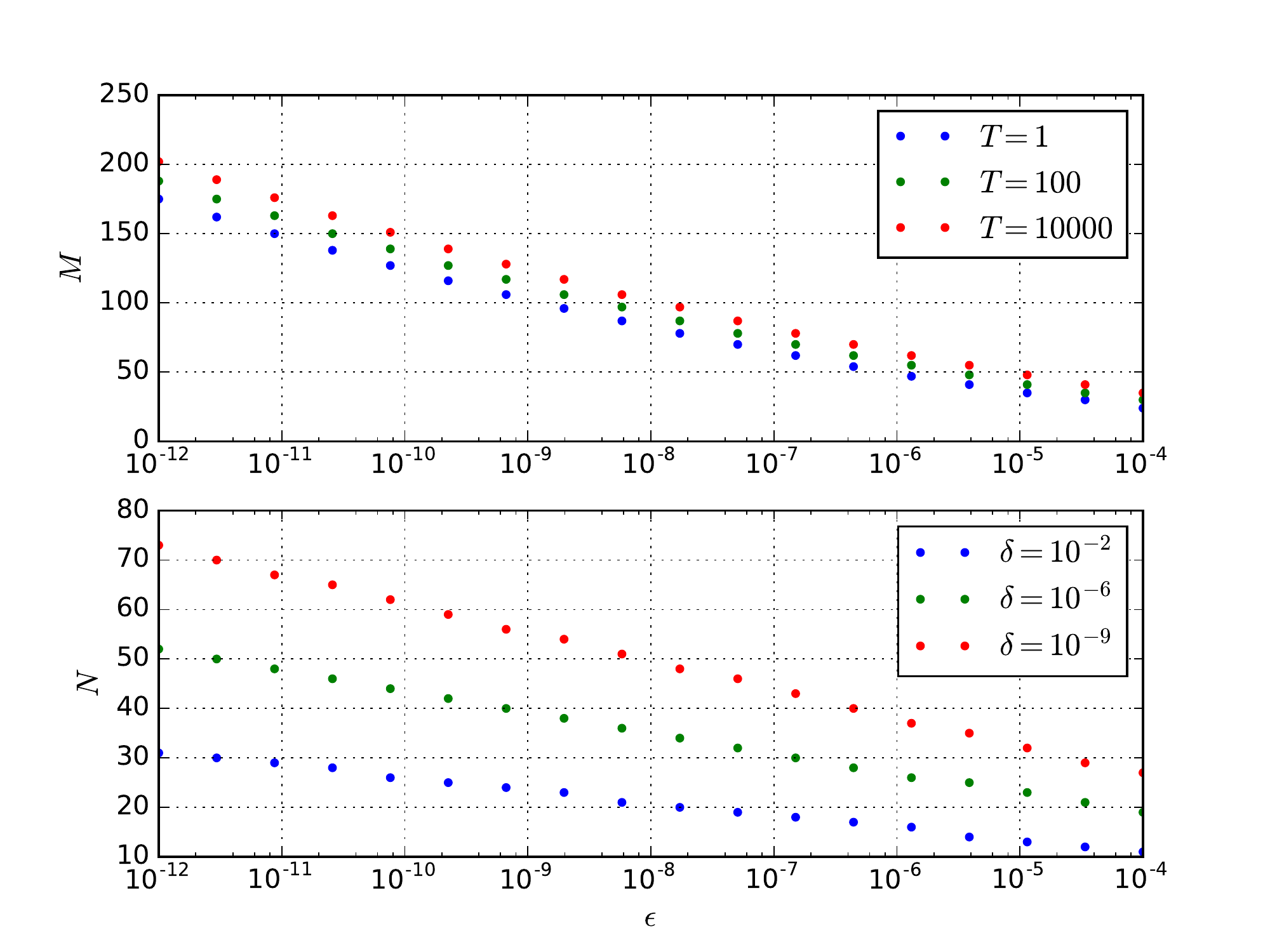}
\end{center}
\end{figure}

The estimate for~$R(\xi)$ in Theorem~\ref{thm: BM rel discr error},
together with the asymptotic behaviours
\[
\Gamma(\beta,q)\sim q^{\beta-1}e^{-q}\quad\text{as~$q\to\infty$},
\]
and
\[
\Gamma(\beta)-\Gamma(\beta,q)\sim\frac{q^\beta}{\beta}
	\quad\text{as $q\to0$,}
\]
imply that \eqref{eq: overall rel error} can be satisfied with
\[
h^{-1}\ge C\log\epsilon_{\text{RD}}^{-1},\qquad
N\ge Ch^{-1}\log(\delta^{-1}\log\epsilon_{\text{RT}}^{-1}),\qquad
M\ge Ch^{-1}\log(T\epsilon_{\text{RT}}^{-1}).
\]
Figure~\ref{fig: rel discr error} shows the relation between 
$\epsilon_{\text{RD}}$~and $1/h$ given by~\eqref{eq: rel discr error},
and confirms that $1/h$ is approximately proportional 
to~$\log\epsilon_{\text{RT}}^{-1}$.  In Figure~\ref{fig: M N}, for 
each value of~$\epsilon$ we computed $h$ by 
solving~\eqref{eq: rel discr error} 
with~$\epsilon_{\text{RD}}=\epsilon/3$, then computed 
$x_\delta$~and $X_T$ by solving~\eqref{eq: rel trunc error} 
with~$\epsilon_{\text{RT}}=\epsilon/3$, and 
finally put $M=\lceil X_T/h\rceil$~and $N=\lceil x_\delta/h\rceil$.

\section{Prony's method}\label{sec: Prony}
The construction of Section~\ref{sec: BM approx} leads to an
exponential sum approximation~\eqref{eq: finite trap} with many 
small exponents~$a_n$.  We will now explain how the 
corresponding terms can be aggregated to yield a more efficient 
approximation.

Consider more generally an exponential sum
\[
g(t)=\sum_{l=1}^L w_le^{-a_lt},
\]
in which the weights and exponents are all strictly positive. Our aim 
is to approximate this function by an exponential sum with fewer 
terms,
\[
g(t)\approx\sum_{k=1}^K\tilde w_ke^{-\tilde a_kt},\quad 2K-1<L,
\]
whose weights~$\tilde w_k$ and exponents~$\tilde a_k$ are again all 
strictly positive.  To this end, let
\[
g_j=(-1)^jg^{(j)}(0)=\sum_{l=1}^L w_la_l^j.
\]
We can hope to find $2K$~parameters $\tilde w_k$~and $\tilde a_k$ 
satisfying the $2K$~conditions
\begin{equation}\label{eq: g tilde w a}
g_j=\sum_{k=1}^K\tilde w_k\tilde a_k^j
	\quad\text{for $0\le j\le 2K-1$,} 
\end{equation}
so that, by Taylor expansion,
\[
g(t)\approx\sum_{j=0}^{2K-1}g_j\,\frac{(-t)^j}{j!}
	=\sum_{k=1}^K\tilde w_k\sum_{j=0}^{2K-1}
	\frac{(-\tilde a_kt)^j}{j!}
	\approx\sum_{k=1}^K\tilde w_ke^{-\tilde a_kt}.
\]
The approximations here require that the $g_j$~and the $\tilde a_kt$
are nicely bounded, and preferably small.

In Prony's method, we seek to satisfy~\eqref{eq: g tilde w a} by 
introducing the monic polynomial
\[
Q(z)=\prod_{k=1}^K(z-\tilde a_k)=\sum_{k=0}^K q_kz^k,
\]
and observing that the unknown coefficients~$q_k$ must satisfy
\[
\sum_{m=0}^K g_{j+m}q_m=\sum_{m=0}^K\sum_{k=1}^K\tilde w_k
	\tilde a_k^{j+m}q_m=\sum_{k=1}^K\tilde w_k\tilde a_k^j
	\sum_{m=0}^Kq_m\tilde a_k^m
	=\sum_{k=1}^K\tilde w_k\tilde a_k^jQ(\tilde a_k)=0,
\]
for~$0\le j\le K-1$ (so that $j+m\le2K-1$ for $1\le m\le K$), 
with~$q_K=1$.  Thus,
\[
\sum_{m=0}^{K-1}g_{j+m}q_m=b_j,\quad\text{where $b_j=-g_{j+K}$,}
\quad\text{for $0\le j\le K-1$,}
\]
which suggests the procedure \textit{Prony} defined in 
Algorithm~\ref{alg: Prony}.  We must, however, beware of several 
potential pitfalls:
\begin{enumerate}
\item the best choice for~$K$ is not clear;
\item the $K\times K$~matrix $[g_{j+k}]$ might be badly conditioned;
\item the roots of the polynomial~$Q(z)$ might not all be real and
positive;
\item the linear system for the $\tilde w_k$ is overdetermined, and 
the least-squares solution might have large residuals;
\item the $\tilde w_k$ might not all be positive.
\end{enumerate}
We will see that nevertheless the algorithm can be quite effective, 
even when~$K=1$, in which case we simply compute
\[
g_0=\sum_{l=1}^Lw_l,\quad
g_1=\sum_{l=1}^Lw_la_l,\quad
\tilde a_1=g_1/g_0,\quad\tilde w_1=g_0.
\]

\begin{algorithm}
\caption{$\textit{Prony}(a_1,\dots,a_L,w_1,\dots w_L, K)$}
\label{alg: Prony}
\begin{algorithmic}
\REQUIRE $2K-1\le L$
\STATE Compute $g_j=\sum_{l=1}^Lw_la_l^j$ for $0\le j\le2K-1$
\STATE Find $q_0$, \dots, $q_{K-1}$ satisfying
$\sum_{m=0}^{K-1}g_{j+m}q_m=-g_{j+K}$ for $0\le j\le K-1$,
and put $q_K=1$
\STATE Find the roots $\tilde a_1$, \dots, $\tilde a_K$ of the 
polynomial $Q(z)=\sum_{k=0}^K q_kz^k$
\STATE Find $\tilde w_1$, \dots, $\tilde w_K$ satisfying
$\sum_{k=1}^K\tilde a_k^j\tilde w_k\approx g_j$ for~$0\le j\le2K-1$
\RETURN $\tilde a_1$, \dots, $\tilde a_K$, $\tilde w_1$, \dots,
$\tilde w_K$
\end{algorithmic}
\end{algorithm}

\begin{table}
\caption{Performance of Prony's method for different $L$~and $K$ 
using the parameters of Example~\ref{ex: Prony}.  For each~$K$,
we seek the largest~$L$ for which the maximum relative error is
less than $\epsilon=10^{-8}$.} \label{tab: Prony}
\begin{center}
\begin{tabular}{c|cccccc}
$\quad L\quad$&$\quad K=1\quad$&$\quad K=2\quad$&$\quad K=3\quad$
&$\quad K=4\quad$&$\quad K=5\quad$&$\quad K=6\quad$\\ 
\hline
 66& 9.64e-01& 4.30e-01& 6.15e-02& 3.02e-03& 4.77e-05& 2.29e-07\\
 65& 8.11e-01& 1.69e-01& 9.89e-03& 1.80e-04& 9.98e-07&
\textbf{1.66e-09}\\
 64& 5.35e-01& 4.59e-02& 1.03e-03& 6.85e-06& 1.35e-08& 7.96e-12\\
 63& 2.72e-01& 9.17e-03& 7.76e-05& 1.89e-07&\textbf{1.36e-10}& 
2.74e-14\\
 62& 1.12e-01& 1.46e-03& 4.64e-06&\textbf{4.19e-09}& 1.11e-12& 
3.58e-16\\
 61& 3.99e-02& 1.98e-04& 2.38e-07& 8.05e-11& 8.28e-15& 3.52e-16\\
 60& 1.28e-02& 2.43e-05& 1.10e-08& 1.41e-12& 4.63e-16& 2.24e-16\\
 59& 3.82e-03& 2.78e-06&\textbf{4.81e-10}& 2.36e-14& 4.63e-16& 
1.25e-16\\
 58& 1.10e-03& 3.05e-07& 2.02e-11& 4.46e-16& 1.23e-16& 6.27e-17\\
 57& 3.07e-04& 3.27e-08& 8.25e-13& 5.60e-17& 8.40e-17&         \\
 56& 8.43e-05&\textbf{3.44e-09}& 3.32e-14& 8.96e-17& 5.60e-17&        
 \\
 55& 2.29e-05& 3.59e-10& 1.32e-15& 4.48e-17& 4.48e-17&         \\
   &         &         &         &         &         &         \\
 48&\textbf{2.30e-09}& 3.98e-17& 2.58e-18&         &         &        
 \\
 47& 6.16e-10& 3.92e-18& 1.54e-18&         &         &         \\
\end{tabular}
\end{center}
\end{table}

\begin{figure}
\caption{Top: the additional contribution~$|\eta(t)|$ to the relative 
error from applying Prony's method in Example~\ref{ex: Prony} with 
$L=65$~and $K=6$. Bottom: the overall relative error for the 
resulting approximation~\eqref{eq: reduced approx} of~$t^{-\beta}$ 
requiring $L-K=59$~fewer terms.}
\label{fig: Prony}
\begin{center}
\includegraphics[scale=0.5]{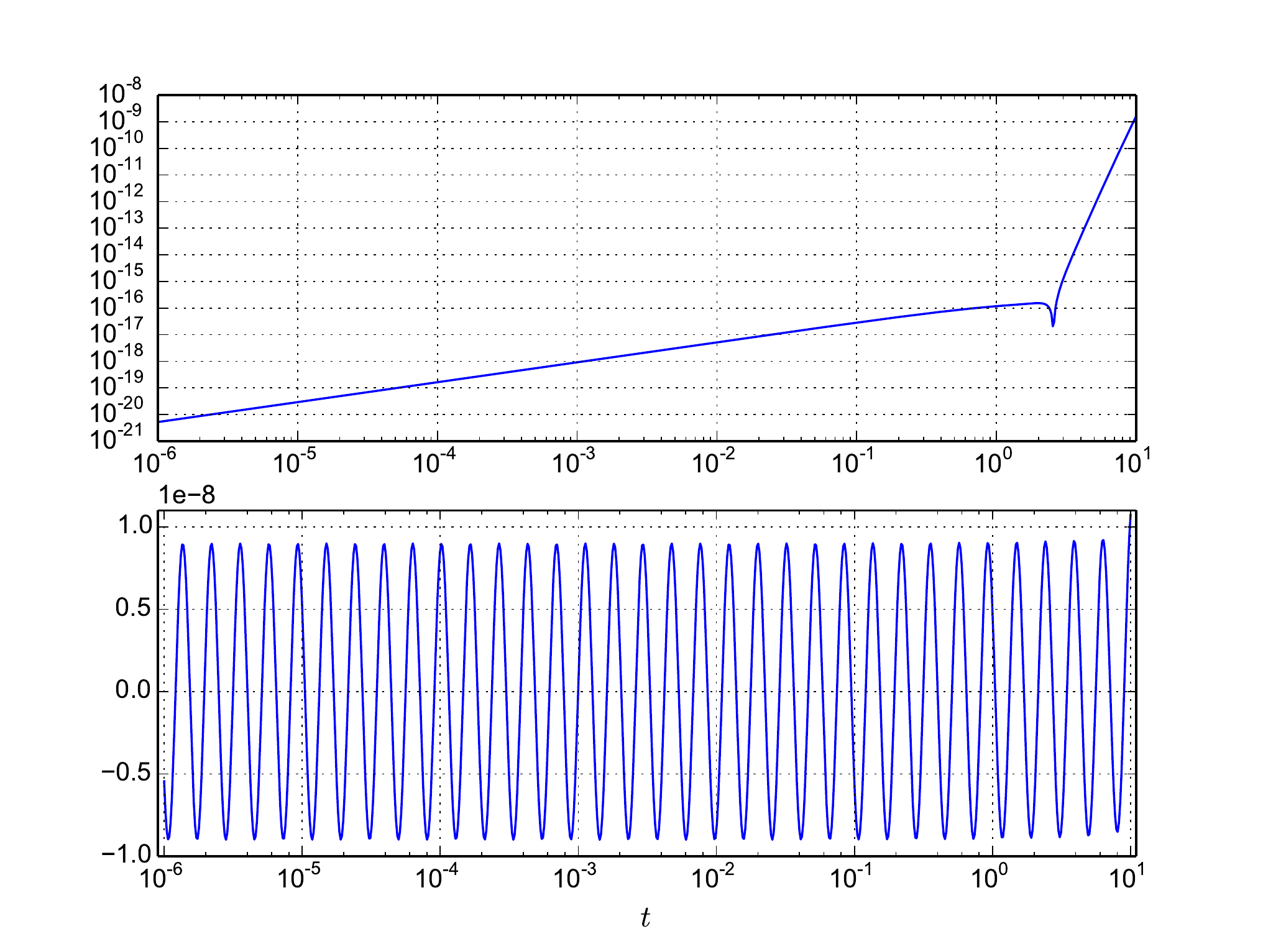}
\end{center}
\end{figure}

\begin{example}\label{ex: Prony}
We took $\beta=3/4$, $\delta=10^{-6}$, $T=10$, $\epsilon=10^{-8}$, 
$\epsilon_{\text{RD}}=0.9\times10^{-8}$~and 
$\epsilon_{RT}=0.05\times10^{-8}$. The methodology of 
Section~\ref{sec: BM approx} led to the choices $h=0.47962$,
$M=65$~and $N=36$, and we confirmed via direct evaluation of the 
relative error that $|\rho^N_M(t)|\le0.92\times10^{-8}$
for~$\delta\le t\le T$.  We applied Prony's method to the first 
$L$~terms of the sum in~\eqref{eq: finite trap}, that is, those 
with~$-M\le n\le L-M$, thereby reducing the total number of terms 
by~$L-K$.   Table~\ref{tab: Prony} lists, for different choices of 
$L$~and $K$, the additional contribution to the relative error, that 
is, $\max_{1\le p\le P}|\eta(t_p)|$ where 
\begin{equation}
\eta(t)=\frac{t^\beta}{\Gamma(\beta)}\biggl( 
	\sum_{k=1}^K\tilde w_ke^{-\tilde a_kt}
	-\sum_{l=1}^Lw_{l'}e^{-a_{l'}t}\biggr),\qquad l'=l-M+1,
\end{equation}
and we use a geometric grid in~$[\delta,1]$ 
given by~$t_p=T^{(p-1)/(P-1)}\delta^{(P-p)/(P-1)}$ for~$1\le p\le P$ 
with~$P=751$.
The largest reduction consistent with maintaining overall accuracy was 
when $L=65$~and $K=6$, and Figure~\ref{fig: Prony} (Top) plots
$|\eta(t)|$ in this case, as well as the overall relative 
error (Bottom) for the resulting approximation,
\begin{equation}\label{eq: reduced approx}
\frac{1}{t^\beta}\approx\frac{1}{\Gamma(\beta)}\biggl(
	\sum_{k=1}^K\tilde w_ke^{-\tilde a_kt}+
	\sum_{n=L-M}^N w_ne^{-a_nt}\biggr)
		\quad\text{for $10^{-6}\le t\le10$.}
\end{equation}
In this way, the number of terms in the exponential sum 
approximation was reduced from~$M+1+N=102$ to~$(M+K-L)+1+N=43$, with 
the maximum absolute value of the relative error growing only slightly 
to~$1.07\times10^{-8}$.  Figure~\ref{fig: Prony} (Bottom) shows that
the relative error is closely approximated by the first term 
in~\eqref{eq: rho sum}, that is, 
$\rho^M_N(t)\approx-2R(h^{-1})\cos\bigl(2\pi h^{-1}\log t-\Phi(h^{-1})
\bigr)$ for~$\delta\le t\le T$.
\end{example}

\section{Approximation based on the substitution $p=\exp(x-e^{-x})$}
\label{sec: new approx}
We now consider the alternative exponents and weights given 
by~\eqref{eq: new exponents and weights}.  A different approach is 
needed for the error analysis, and we define
\[
\mathcal{I}(f)=\int_{-\infty}^\infty f(x)\,dx
\quad\text{and}\quad
\mathcal{Q}(f,h)=h\sum_{n=-\infty}^\infty f(nh)\quad\text{for $h>0$,}
\]
so that $\mathcal{Q}(f,h)$ is an infinite trapezoidal rule 
approximation to~$\mathcal{I}(f)$.  Recall the following well-known
error bound.

\begin{theorem}\label{thm: trapezoidal error}
Let $r>0$. Suppose that $f(z)$ is continuous on the closed 
strip~$|\Im z|\le r$, analytic on the open strip~$|\Im z|<r$, and 
satisfies
\[
\int_{-\infty}^\infty\bigl(|f(x+ir)|+|f(x-ir)|\bigr)\,dx\le A_r
\]
with
\[
\int_{-r}^r|f(x\pm iy)|\,dy\to0\quad\text{as $|x|\to\infty$.}
\]
Then, for all $h>0$,
\[
|\mathcal{Q}(f,h)-\mathcal{I}(f)|
	\le\frac{A_re^{-2\pi r/h}}{1-e^{-2\pi r/h}}.
\]
\end{theorem}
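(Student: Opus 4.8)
The plan is to prove this classical trapezoidal error bound via contour integration, following the standard argument that expresses the quadrature error as a contour integral over the boundary of the strip $|\Im z|\le r$. First I would introduce the function
\[
g(z)=\pi\cot(\pi z/h),
\]
whose poles are exactly at the points $z=nh$, $n\in\Z$, each simple with residue $h$. Thus $\tfrac{1}{2\pi i}\oint f(z)g(z)\,dz$ over a contour enclosing the real axis picks out $h\sum_n f(nh)=\mathcal{Q}(f,h)$. The idea is to deform this contour to the two horizontal lines $\Im z=\pm r$ (traversed in opposite directions), so that
\[
\mathcal{Q}(f,h)=\frac{1}{2\pi i}\left(\int_{\Im z=r}-\int_{\Im z=-r}\right)f(z)g(z)\,dz,
\]
the vanishing of the vertical side-contributions being guaranteed by the hypothesis $\int_{-r}^r|f(x\pm iy)|\,dy\to0$ as $|x|\to\infty$ (combined with the boundedness of $\cot$ away from the real axis). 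To extract the \emph{error} rather than $\mathcal{Q}$ itself, I would use the partial-fraction-type identity $\pi\cot(\pi z/h)=\mp\pi i + \frac{2\pi i}{e^{\mp 2\pi i z/h}-1}$ valid for $\pm\Im z>0$; the constant $\mp\pi i$ pieces reassemble, after using $\int_{\Im z=\pm r}f\,dz=\mathcal{I}(f)$ (by analyticity and the decay hypothesis), into exactly $\mathcal{I}(f)$, leaving
\[
\mathcal{Q}(f,h)-\mathcal{I}(f)=\frac{1}{2\pi i}\int_{\Im z=r}\frac{2\pi i\,f(z)}{e^{-2\pi i z/h}-1}\,dz+\frac{1}{2\pi i}\int_{\Im z=-r}\frac{2\pi i\,f(z)}{e^{2\pi i z/h}-1}\,dz.
\]

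Next I would estimate each integral. On the line $z=x+ir$ one has $|e^{-2\pi i z/h}|=e^{2\pi r/h}$, so $|e^{-2\pi i z/h}-1|\ge e^{2\pi r/h}-1$, giving
\[
\left|\int_{\Im z=r}\frac{f(z)}{e^{-2\pi i z/h}-1}\,dz\right|\le\frac{1}{e^{2\pi r/h}-1}\int_{-\infty}^\infty|f(x+ir)|\,dx,
\]
and symmetrically on $\Im z=-r$. Adding the two contributions and invoking the hypothesis $\int(|f(x+ir)|+|f(x-ir)|)\,dx\le A_r$ yields
\[
|\mathcal{Q}(f,h)-\mathcal{I}(f)|\le\frac{A_r}{e^{2\pi r/h}-1}=\frac{A_re^{-2\pi r/h}}{1-e^{-2\pi r/h}},
\]
which is the claimed bound.

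The main obstacle is the justification of the contour deformation itself: one must argue carefully that the rectangular contours $[-X,X]\times[-r,r]$ can be pushed to $X\to\infty$ with the vertical sides contributing nothing in the limit, and that the sum of residues at $z=nh$ for $|n|\le X/h$ converges to $\mathcal{Q}(f,h)$. For the vertical sides, one splits the segment into the part near the real axis (where $\cot$ has poles, so $X$ must avoid the lattice $hn+h/2$ say) and the parts with $|\Im z|\ge\eta$ for small $\eta$ (where $|\cot(\pi z/h)|$ is bounded uniformly); the hypothesis $\int_{-r}^{r}|f(x\pm iy)|\,dy\to0$ then kills the off-axis part, while continuity of $f$ on the closed strip handles the near-axis part once one chooses the $X$-values to stay a fixed distance from the poles. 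A clean way to sidestep the pole-avoidance bookkeeping is to note that, by continuity and the decay hypothesis, $f$ is bounded on the closed strip and the vertical integrals are dominated by $\sup_{|y|\le r}|f(\pm X+iy)|$ times a bounded factor, and this supremum tends to $0$ along a suitable sequence $X_j\to\infty$. Everything else is routine once this limiting argument is in place.
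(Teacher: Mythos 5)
The paper offers no proof of this theorem; it simply cites McNamee, Stenger and Whitney \cite[Theorem~5.2]{McNameeEtAl1971}. Your argument is, in outline, exactly the classical proof contained in that reference: represent $h\sum_n f(nh)$ as a contour integral of $f(z)\,\pi\cot(\pi z/h)$ around the strip, split off the constant part of $\pi\cot$ in each half-plane to reproduce $\mathcal{I}(f)$, and bound the remainder on the lines $\Im z=\pm r$ using $|e^{\mp 2\pi iz/h}|=e^{2\pi r/h}$ and the reverse triangle inequality; the error representation you display and the bound that follows from it are correct, so there is no gap in substance. Two details should nonetheless be repaired. First, the signs: the correct splitting is $\pi\cot(\pi z/h)=\mp\pi i\mp\frac{2\pi i}{e^{\mp 2\pi iz/h}-1}$ for $\pm\Im z>0$, and with the counterclockwise orientation the top line enters with a minus sign; as written, your intermediate identities do not actually combine to give your (correct) final formula for $\mathcal{Q}(f,h)-\mathcal{I}(f)$. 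Second, your proposed ``clean way to sidestep'' the vertical-side bookkeeping does not work: continuity on the closed \emph{unbounded} strip does not make $f$ bounded there, and the hypothesis $\int_{-r}^{r}|f(x\pm iy)|\,dy\to0$ controls an integral of $|f|$, not $\sup_{|y|\le r}|f(\pm X+iy)|$. No shortcut is needed: take the vertical sides at $\Re z=\pm(N+\tfrac12)h$, where $\cot(\pi z/h)=-i\tanh(\pi y/h)$ has modulus at most $1$, so each side contributes at most a constant times $\int_{-r}^{r}|f(\pm(N+\tfrac12)h+iy)|\,dy$, which tends to zero by hypothesis --- this is precisely the form of decay the theorem assumes. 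With those two repairs the proof is complete and coincides with the argument in the cited source.
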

\begin{proof}
See McNamee, Stenger and  Whitney~\cite[Theorem~5.2]{McNameeEtAl1971}.
\end{proof}

For~$t>0$, we define the entire analytic function of~$z$,
\begin{equation}\label{eq: f(z) def}
f(z)=\exp\bigl(-\varphi(z,t)\bigr)(1+e^{-z}),
\end{equation}
where $\varphi(z,t)$ is the analytic continuation of the function 
defined in~\eqref{eq: phi(x,t)}.
In this way, $t^{-\beta}=\mathcal{I}(f)/\Gamma(\beta)$ 
by~\eqref{eq: new repn}.

\begin{lemma}\label{lem: M}
If $0<r<\pi/2$, then the function~$f$ defined 
in~\eqref{eq: f(z) def} satisfies the hypotheses of 
Theorem~\ref{thm: trapezoidal error} with~$A_r\le Ct^{-\beta}$ for 
$0<t\le1$, where the constant~$C>0$ depends only on $\beta$~and $r$.
\end{lemma}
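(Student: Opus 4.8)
The plan is as follows. Since $f$ in~\eqref{eq: f(z) def} is entire, the continuity and analyticity requirements of Theorem~\ref{thm: trapezoidal error} are automatic, and it remains only to (i) produce a bound $\int_{-\infty}^\infty(|f(x+ir)|+|f(x-ir)|)\,dx\le A_r$ with $A_r\le Ct^{-\beta}$, and (ii) check that $\int_{-r}^r|f(x\pm iy)|\,dy\to0$ as $|x|\to\infty$. First I would record, for $z=x+iy$, the decomposition $z-e^{-z}=u+i\theta$ with $u=u(x,y)=x-e^{-x}\cos y$ and $\theta=\theta(x,y)=y+e^{-x}\sin y$, so that $\exp(z-e^{-z})=e^{u}e^{i\theta}$ and hence, from~\eqref{eq: phi(x,t)},
\[
|f(z)|=\exp\bigl(\beta u-te^{u}\cos\theta\bigr)\,|1+e^{-z}|,\qquad
|1+e^{-z}|=\bigl(1+2e^{-x}\cos y+e^{-2x}\bigr)^{1/2}\le1+e^{-x},
\]
the last inequality holding because $\cos y>0$ for $|y|\le r<\pi/2$. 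Since $u$, $\cos\theta$ and $|1+e^{-z}|$ are even in~$y$, it suffices to estimate on the line $z=x+ir$ and double.

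For~(i), observe that $x\mapsto u(x,r)$ is strictly increasing with derivative $1+e^{-x}\cos r\ge1$, maps $\R$ onto $\R$, and satisfies $u(x,r)\le x$. Choose $r'=\tfrac12(r+\pi/2)\in(r,\pi/2)$ and $x_1=\max\{0,\log(\sin r/(r'-r))\}\ge0$, so that $r\le\theta\le r'$, hence $\cos\theta\ge\cos r'=:c_0>0$, for all $x\ge x_1$. On $[x_1,\infty)$ I would bound $|f(x+ir)|\le2\exp(\beta u-c_0te^{u})$, use the substitution $u=u(x,r)$ (under which $du\ge dx$) and then $v=e^{u}$, obtaining $\int_{x_1}^\infty|f(x+ir)|\,dx\le2\int_{-\infty}^\infty e^{\beta u-c_0te^{u}}\,du=2\Gamma(\beta)(c_0t)^{-\beta}$. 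On $(-\infty,x_1)$ the damping factor $\exp(-te^{u}\cos\theta)$ is worthless because $\cos\theta$ runs through negative values; instead I would use $e^{u}\le e^{x}\le e^{x_1}$ together with $0<t\le1$ to get $\exp(-te^{u}\cos\theta)\le e^{e^{x_1}}$, write $\beta u=\beta x-\beta e^{-x}\cos r$, and note that the super-exponentially small factor $\exp(-\beta e^{-x}\cos r)$ (again $\cos r>0$) overwhelms the $1+e^{-x}$ growth; the substitution $w=e^{-x}$ then turns $\int_{-\infty}^{x_1}|f(x+ir)|\,dx$ into $e^{e^{x_1}}\int_{e^{-x_1}}^\infty w^{-\beta-1}e^{-\beta w\cos r}(1+w)\,dw$, a finite constant depending only on $\beta$ and~$r$. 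Adding the two contributions, doubling for the $z=x-ir$ line, and using $1\le t^{-\beta}$ for $0<t\le1$, yields $A_r\le C(\beta,r)t^{-\beta}$.

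For~(ii) it is enough to show $\sup_{|y|\le r}|f(x+iy)|\to0$ as $|x|\to\infty$. As $x\to+\infty$, uniformly in $|y|\le r$ one has $\theta\to y$, so $\cos\theta\ge\tfrac12\cos r$ once $x$ is large, while $u\ge x-1$ and $|1+e^{-z}|\le2$; hence $|f(x+iy)|\le2\exp\bigl(\beta x-\tfrac12 t\cos r\,e^{x-1}\bigr)\to0$. As $x\to-\infty$, uniformly one has $u\le0$ (for $x\le0$) so $e^{u}\le1$ and $\exp(-te^{u}\cos\theta)\le e$, while $\beta u\le\beta x-\beta e^{-x}\cos r$; hence $|f(x+iy)|\le e(1+e^{-x})\exp\bigl(\beta x-\beta e^{-x}\cos r\bigr)\to0$.

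The only real difficulty is the region $x<x_1$ in step~(i): there $\cos\theta$ oscillates through negative values, so $\exp(-te^{u}\cos\theta)$ supplies no decay, and one must extract the needed (in fact super-exponential) decay from $\exp(\beta u)=\exp(\beta x-\beta e^{-x}\cos r)$. Both this and the elementary bound $|1+e^{-z}|\le1+e^{-x}$ rely on $\cos r>0$, i.e.\ exactly on the hypothesis $r<\pi/2$; everything else is routine bookkeeping that returns the integrals to Gamma-function form via $v=e^{u}$ and $w=e^{-x}$.
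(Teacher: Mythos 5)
Your proposal is correct and follows essentially the same route as the paper's proof: both control the angle $\theta=y+e^{-x}\sin y$ below $\pi/2$ for $x$ beyond an explicit threshold so that $\cos\theta$ is bounded below and the right tail reduces to a Gamma integral of size $O(t^{-\beta})$, and both extract the left-tail decay from the factor $\exp(-\beta e^{-x}\cos r)$. The only differences are bookkeeping: you split $\mathbb{R}$ at a single point $x_1\ge0$ and absorb the middle region into the left piece via the crude bound $\exp(-te^{u}\cos\theta)\le e^{e^{x_1}}$, and you use the monotone substitution $u=u(x,r)$ where the paper instead lower-bounds $e^{u}$ pointwise and splits into three regions.
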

\begin{proof}
A short calculation shows that
\[
\Re\varphi(x\pm iy,t)=t\exp(x-e^{-x}\cos y)\cos(y+e^{-x}\sin y)
	-\beta(x-e^{-x}\cos y),
\]
and that if $0<\epsilon<\pi/2-r$, then
\begin{equation}\label{eq: x*}
0\le y+e^{-x}\sin y\le\frac{\pi}{2}-\epsilon
	\quad\text{for $x\ge x^*=\log\frac{\sin r}{\pi/2-r-\epsilon}$
		and $0\le y\le r$.}
\end{equation}
Thus, if $x\ge x^*$ then
$\cos(r+e^{-x}\sin r)\ge\cos(\pi/2-\epsilon)=\sin\epsilon$ so
\[
\Re\varphi(x\pm ir,t)\ge t\exp(x-e^{-x^*}\cos r)\sin\epsilon
	-\beta x+\beta e^{-x}\cos r\ge cte^x-\beta x,
\]
where $c=\exp(-e^{-x^*}\cos r)\sin\epsilon>0$. If necessary, we 
increase $x^*$ so that $x^*>0$.  Since 
$|1+e^{-(x\pm ir)}|\le1+e^{-x}$,
\begin{align*}
\int_{x^*}^\infty|f(x\pm ir)|\,dx
	&=\int_{x^*}^\infty\exp\bigl(-\Re\varphi(x\pm ir,t)\bigr)
	\bigl|1+e^{-(x\pm ir)}\bigr|\,dx\\
	&\le\int_{x^*}^\infty\exp(-cte^x+\beta x)(1+e^{-x})\,dx,
\end{align*}
and the substitution~$p=e^x$ then yields, with $p^*=e^{x^*}$,
\begin{multline*}
\int_{x^*}^\infty|f(x\pm ir)|\,dx
	\le\int_{p^*}^\infty e^{-ctp}p^\beta(1+p^{-1})\,\frac{dp}{p}
	\le\bigl(1+(p^*)^{-1}\bigr)\int_{p^*}^\infty 
		e^{-ctp}p^\beta\,\frac{dp}{p}\\
	=\frac{1+(p^*)^{-1}}{(ct)^\beta}\int_{ctp^*}^\infty e^{-p}p^\beta
		\,\frac{dp}{p}
	\le\frac{1+(p^*)^{-1}}{(ct)^\beta}\int_0^\infty 
		e^{-p}p^\beta \,\frac{dp}{p}\equiv Ct^{-\beta}.
\end{multline*}
Also, if $x\ge0$ then
\[
\Re\varphi(x\pm ir,t)\ge-t\exp(x-e^{-x}\cos r)-\beta(x-e^{-x}\cos r)
	\ge-te^x-\beta x
\]
so
\[
\int_0^{x^*}|f(x\pm ir)|\,dx
	\le\int_0^{x^*}\exp(te^x+\beta x)(1+e^{-x})\,dx
	\le2x^*\exp\bigl(te^{x^*}+\beta x^*\bigr),
\]
which is bounded for~$0<t\le1$.
Similarly, if $x\le0$ then $\exp(x-e^{-x}\cos r)\le1$ so
$\Re\varphi(x\pm ir,t)\ge-t+\beta e^{-x}\cos r$ and therefore,
using again the substitution~$p=e^x$,
\begin{align*}
\int_{-\infty}^0|f(x\pm ir)|\,dx
	&\le\int_{-\infty}^0\exp(t-\beta e^{-x}\cos r)(1+e^{-x})\,dx\\
	&=\int_0^\infty\exp(t-\beta e^x\cos r)(1+e^x)\,dx
	=e^t\int_1^\infty e^{-\beta p\cos r}(1+p)\,\frac{dp}{p},
\end{align*}
which is also bounded for~$0<t\le1$.  The required estimate for~$A_r$ 
follows.

If $x\ge x^*$, then the preceding inequalities based 
on~\eqref{eq: x*} show that 
\[
\int_{-r}^r|f(x+iy)|\,dy
	\le2r\max_{|y|\le r}|f(x+iy)|
	\le2r\exp(-cte^x+\beta x)(1+e^{-x}),
\]
which tends to zero as~$x\to\infty$ for any~$t>0$.  Similarly,
if $x\le0$, then $\Re\varphi(x\pm iy)\ge-t+\beta e^{-x}\cos r$
for~$|y|\le r$, so
\[
\int_{-r}^r|f(x+iy)|\,dy\le2r\exp(t-\beta e^{-x}\cos r)(1+e^{-x}),
\]
which again tends to zero as~$x\to-\infty$.
\end{proof}

Together, Theorem~\ref{thm: trapezoidal error} and Lemma~\ref{lem: M}
imply the following bound for the relative 
error~\eqref{eq: infinite rel err} in the infinite exponential sum 
approximation~\eqref{eq: infinite trap}.

\begin{theorem}\label{thm: Q(f) error}
Let $h>0$ and define $a_n$~and $w_n$ 
by~\eqref{eq: new exponents and weights}. If $0<r<\pi/2$, then 
there exists a constant~$C_1$ (depending on $\beta$~and $r$) such that
\[
|\rho(t)|\le C_1 e^{-2\pi r/h}\quad\text{for $0<t\le1$.}
\]
\end{theorem}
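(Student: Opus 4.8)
The plan is to recognise the exponential sum as the infinite trapezoidal rule $\mathcal{Q}(f,h)$ applied to the integrand of~\eqref{eq: new repn}, and then simply to combine Lemma~\ref{lem: M} with Theorem~\ref{thm: trapezoidal error}. First I would rewrite $\rho(t)$. With $a_n$ and $w_n$ as in~\eqref{eq: new exponents and weights} we have $\log a_n=nh-e^{-nh}$, so $\varphi(nh,t)=t\,a_n-\beta\log a_n$ and hence
\[
w_ne^{-a_nt}=h\bigl(1+e^{-nh}\bigr)a_n^\beta e^{-a_nt}
	=h\exp\bigl(-\varphi(nh,t)\bigr)\bigl(1+e^{-nh}\bigr)=hf(nh),
\]
with $f$ as in~\eqref{eq: f(z) def}. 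Thus $\sum_n w_ne^{-a_nt}=\mathcal{Q}(f,h)$, and since $t^{-\beta}=\mathcal{I}(f)/\Gamma(\beta)$ by~\eqref{eq: new repn}, the relative error~\eqref{eq: infinite rel err} becomes
\[
\rho(t)=1-\frac{t^\beta}{\Gamma(\beta)}\mathcal{Q}(f,h)
	=\frac{t^\beta}{\Gamma(\beta)}\bigl(\mathcal{I}(f)-\mathcal{Q}(f,h)\bigr).
\]

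Next I would invoke the two results already in hand. Fix $r$ with $0<r<\pi/2$. By Lemma~\ref{lem: M}, $f$ satisfies the hypotheses of Theorem~\ref{thm: trapezoidal error} with $A_r\le Ct^{-\beta}$ for $0<t\le1$, where $C=C(\beta,r)$; Theorem~\ref{thm: trapezoidal error} then yields $|\mathcal{Q}(f,h)-\mathcal{I}(f)|\le Ct^{-\beta}e^{-2\pi r/h}/(1-e^{-2\pi r/h})$. Substituting into the displayed expression for $\rho(t)$, the powers $t^{\pm\beta}$ cancel and
\[
|\rho(t)|\le\frac{C}{\Gamma(\beta)}\cdot\frac{e^{-2\pi r/h}}{1-e^{-2\pi r/h}}
	\le C_1 e^{-2\pi r/h},
\]
where $C_1=C_1(\beta,r)$ absorbs the factor $(1-e^{-2\pi r/h})^{-1}$, which is bounded on the range of step sizes that make the estimate meaningful (e.g.\ $(1-e^{-2\pi r/h})^{-1}\le2$ whenever $e^{-2\pi r/h}\le\tfrac12$).

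Essentially all the analytic work has been done in Lemma~\ref{lem: M}, so there is no serious obstacle here; the two points needing attention are the algebraic identity $w_ne^{-a_nt}=hf(nh)$, which makes the exponential sum \emph{exactly} the trapezoidal rule for $f$ rather than merely an approximation to it, and the handling of the factor $(1-e^{-2\pi r/h})^{-1}$, which one should either tie to a fixed upper bound on $h$ or dismiss by observing that the bound $C_1 e^{-2\pi r/h}$ is only of interest when $2\pi r/h$ is bounded away from zero.
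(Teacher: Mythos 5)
Your proposal is correct and follows exactly the route the paper intends: the paper's own proof consists of the single observation that $hf(nh)=w_ne^{-a_nt}$, leaving the combination of Lemma~\ref{lem: M} with Theorem~\ref{thm: trapezoidal error} implicit, and you have simply written out those details. Your remark about absorbing the factor $(1-e^{-2\pi r/h})^{-1}$ by restricting to step sizes with, say, $e^{-2\pi r/h}\le\tfrac12$ is a point the paper glosses over entirely, and is worth keeping.
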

\begin{proof}
The definitions above mean that $hf(nh)=w_ne^{-a_nt}$.
\end{proof}

Thus, a relative accuracy~$\epsilon$ is achieved by choosing~$h$ 
of order~$1/\log\epsilon^{-1}$.  Of course, in practice we must 
compute a finite sum, and the next lemma estimates the two parts of 
the associated truncation error.

\begin{lemma}\label{lem: trunc}
Let $h>0$, $0<\theta<1$ and $0<t\le1$.  Then the 
function~$f$ defined in~\eqref{eq: f(z) def} satisfies
\begin{equation}\label{eq: Mh}
\frac{h}{\Gamma(\beta)}\sum_{M=-\infty}^{-M-1}f(nh)
	\le C_2\exp(-\beta e^{Mh})\quad\text{for}\quad
	Mh\ge\begin{cases}\log(\beta^{-1}-1),&0<\beta<1/2,\\
      0,&\beta\ge1/2,
\end{cases}
\end{equation}
and
\begin{equation}\label{eq: Nh}
\frac{h}{\Gamma(\beta)}\sum_{n=N+1}^\infty f(nh)
	\le\frac{C_3}{t^\beta}\,\exp\bigl(-\theta te^{Nh-1}\bigr)
\quad\text{for}\quad Nh\ge1+\log(\beta t^{-1}).
\end{equation}
When $0<\beta\le1$, the second estimate holds also with~$\theta=1$.
\end{lemma}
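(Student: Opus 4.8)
The plan is to bound each one-sided tail of the trapezoidal sum by the corresponding tail of $\mathcal{I}(f)$, exploiting monotonicity of $f$ (or of a simple majorant), and then to evaluate that integral tail by the change of variable $p=e^{\psi(x)}$, where $\psi(x)=x-e^{-x}$ (so that $\varphi(x,t)=te^{\psi(x)}-\beta\psi(x)$) --- the same substitution used in the proof of Lemma~\ref{lem: M} --- which reduces everything to an incomplete Gamma function that can be estimated crudely. Comparing a sum with an integral needs only that the summand be monotone from the left endpoint onwards, so in each case the first task is to locate where $f$ (or its majorant) is monotone.

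For the left-hand tail, note that for $x\le0$ we have $\varphi(x,t)=te^{\psi(x)}-\beta x+\beta e^{-x}\ge\beta e^{-x}$, since $te^{\psi(x)}\ge0$ and $-\beta x\ge0$; hence $f(x)\le e^{-\beta e^{-x}}(1+e^{-x})$ there. Writing the tail as $h\sum_{m\ge M+1}f(-mh)$ and putting $\chi(y)=e^{-\beta e^{y}}(1+e^{y})$, one computes $\chi'(y)=e^{y}e^{-\beta e^{y}}(1-\beta-\beta e^{y})$, so $\chi$ decreases on $[Mh,\infty)$ precisely when $e^{Mh}\ge\beta^{-1}-1$; this is the stated hypothesis, and when $\beta\ge1/2$ one has $\beta^{-1}-1\le1$, so $Mh\ge0$ already suffices. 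The tail is then at most $\int_{Mh}^{\infty}\chi(y)\,dy$, and $p=e^{y}$ turns this into $\int_{e^{Mh}}^{\infty}e^{-\beta p}(1+p^{-1})\,dp\le(1+e^{-Mh})\beta^{-1}e^{-\beta e^{Mh}}\le2\beta^{-1}e^{-\beta e^{Mh}}$, giving~\eqref{eq: Mh} with $C_2=2/\Gamma(\beta+1)$.

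For the right-hand tail, differentiating gives $f'(x)=e^{-\varphi(x,t)}\bigl(-(1+e^{-x})^{2}(te^{\psi(x)}-\beta)-e^{-x}\bigr)$, which is negative as soon as $te^{\psi(x)}\ge\beta$, i.e.\ as soon as $\psi(x)\ge\log(\beta t^{-1})$. Since $\psi$ is increasing and $\psi(x)\ge x-1$ for $x\ge0$, the hypothesis $Nh\ge1+\log(\beta t^{-1})$ gives $\psi(Nh)\ge\log(\beta t^{-1})$ provided $Nh\ge0$, so $f$ decreases on $[Nh,\infty)$ and the tail is $\le\int_{Nh}^{\infty}f(x)\,dx$. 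The substitutions $p=e^{\psi(x)}$ then $q=tp$ give $\int_{Nh}^{\infty}f(x)\,dx=t^{-\beta}\Gamma\bigl(\beta,te^{\psi(Nh)}\bigr)\le t^{-\beta}\Gamma(\beta,te^{Nh-1})$, using $e^{\psi(Nh)}=e^{Nh-e^{-Nh}}\ge e^{Nh-1}$ (again because $Nh\ge0$) and the monotonicity of $\Gamma(\beta,\cdot)$. It remains to bound $\Gamma(\beta,Q)$ for $Q=te^{Nh-1}\ge\beta$: when $0<\beta\le1$, the estimate $q^{\beta-1}\le Q^{\beta-1}\le\beta^{\beta-1}$ on $[Q,\infty)$ yields $\Gamma(\beta,Q)\le\beta^{\beta-1}e^{-Q}$ (the $\theta=1$ assertion), while for $\beta>1$ and fixed $\theta\in(0,1)$ the bound $e^{-(1-\theta)q}q^{\beta-1}\le\bigl((\beta-1)/((1-\theta)e)\bigr)^{\beta-1}$ on $(0,\infty)$ yields $\Gamma(\beta,Q)\le\theta^{-1}\bigl((\beta-1)/((1-\theta)e)\bigr)^{\beta-1}e^{-\theta Q}$; either way~\eqref{eq: Nh} follows.

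The one genuine subtlety --- and what I expect to be the main obstacle --- is that $Nh\ge1+\log(\beta t^{-1})$ does not force $Nh\ge0$ when $1+\log(\beta t^{-1})<0$, i.e.\ when $\beta<1/e$ and $t>e\beta$, so the monotonicity step above is then unavailable. But in that regime $Nh<0$ forces $te^{Nh-1}<te^{-1}\le e^{-1}$, so $e^{-\theta te^{Nh-1}}$ is bounded below by the positive constant $e^{-1/e}$, and it is enough to bound the tail crudely: since $f\ge0$ and $h\sum_{n\in\Z}f(nh)=\mathcal{Q}(f,h)$, Theorem~\ref{thm: Q(f) error} gives $\tfrac{h}{\Gamma(\beta)}\sum_{n>N}f(nh)\le\tfrac{h}{\Gamma(\beta)}\sum_{n\in\Z}f(nh)=\tfrac{1-\rho(t)}{t^{\beta}}\le\tfrac{1+C_1}{t^{\beta}}$, which already has the required form. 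Hence~\eqref{eq: Nh} holds in all cases, with $C_3$ depending only on $\beta$ and $\theta$.
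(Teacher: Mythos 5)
Your proof is correct and follows essentially the same route as the paper: bound each tail by a monotone majorant, compare the sum with an integral, substitute to reduce to an incomplete Gamma function, and for $\beta>1$ split $e^{-q}=e^{-\theta q}\bigl(e^{-(1-\theta)q}q^{\beta-1}\bigr)$; your variants (dropping the $e^{t}$ factor on the left, using the exact substitution $p=e^{\psi(x)}$ on the right instead of the paper's majorant $2\exp(-te^{x-1}+\beta x)$) only sharpen the constants. Your explicit patch for the case $Nh<0$ addresses a configuration the paper tacitly excludes by taking $N\ge0$, and it is a valid, if not strictly necessary, addition.
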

\begin{proof}
If $n\le0$, then $\varphi(nh,t)\ge-t+\beta e^{-nh}$ so
\[
f(nh)\le g_1(-nh)\quad\text{where}\quad 
	g_1(x)=\exp(t-\beta e^x)(1+e^x).
\]
The function~$g_1(x)$ decreases for~$x>\log(\beta^{-1}-1)$ if 
$0<\beta<1/2$, and for all $x\ge0$ if $\beta\ge1/2$, so
\[
h\sum_{n=-\infty}^{-M-1}f(nh)\le h\sum_{n=M+1}^\infty g_1(nh)
	\le\int_{Mh}^\infty g_1(x)\,dx
	\quad\text{for $M$ as in~\eqref{eq: Mh},}
\]
and the substitution~$p=e^x$ gives
\[
\int_{Mh}^\infty g_1(x)\,dx=\int_{e^{Mh}}^\infty e^{t-\beta p}
	(1+p)\,\frac{dp}{p}
	\le2e^t\int_{e^{Mh}}^\infty e^{-\beta p}\,dp
	=\frac{2e^t}{\beta}\exp(-\beta e^{Mh}),
\]
so the first estimate holds with $C_2=2e/\Gamma(\beta+1)$.

If $n\ge0$ we have $\varphi(nh,t)\ge t\exp(nh-1)-\beta nh$~and
$1+e^{-nh}\le2$, so 
\[
f(nh)\le g_2(nh)\quad\text{where}\quad 
	g_2(x)=2\exp(-te^{x-1}+\beta x).
\]
The function~$g_2(x)$ decreases for~$x>1+\log(\beta t^{-1})$, so
\[
h\sum_{n=N+1}^\infty f(nh)\le\int_{Nh}^\infty g_2(x)\,dx
	\quad\text{for $N$ as in \eqref{eq: Nh},}
\]
and the substitution~$p=e^x$ gives
\[
\int_{Nh}^\infty g_2(x)\,dx
	\le 2\int_{e^{Nh}}^\infty e^{-te^{-1}p}p^\beta\,\frac{dp}{p}\\
	=2\biggl(\frac{e}{t}\biggr)^\beta
	\int_{te^{Nh-1}}^\infty e^{-p}p^{\beta-1}\,dp.
\]
Since $te^{Nh-1}\ge\beta$, if $0<\beta\le1$ then the integral on the 
right is bounded above by~$\beta^{\beta-1}\exp(-te^{Nh-1})$.  If 
$\beta>1$, then $p^{\beta-1}e^{-(1-\theta)p}$ is bounded for~$p>0$ so
\[
\int_{te^{Nh-1}}^\infty e^{-p}p^{\beta-1}\,dp
=\int_{te^{Nh-1}}^\infty e^{-\theta p}
	(p^{\beta-1}e^{-(1-\theta)p})\,dp\le C\exp(-\theta te^{Nh-1}),
\]
completing the proof.
\end{proof}

It is now a simple matter to see that the number of terms~$L=M+1+N$ 
needed to ensure a relative accuracy~$\epsilon$ 
for~$\delta\le t\le1$ is of 
order~$(\log\epsilon^{-1})\log(\delta^{-1}\log\epsilon^{-1})$.

\begin{theorem}\label{thm: h M N}
Let $a_n$~and $w_n$ be defined 
by~\eqref{eq: new exponents and weights}.
For $0<\delta\le1$ and for a sufficiently small~$\epsilon>0$, if
\[
\frac{1}{h}\ge\frac{1}{2\pi r}\,\log\frac{3C_1}{\epsilon},
\quad
M\ge\frac{1}{h}\,\log\biggl(\frac{1}{\beta}\,
	\log\frac{3C_2}{\epsilon}\biggr),\quad
N\ge1+\frac{1}{h}\,\log\biggl(\frac{1}{\theta\delta}\,
	\log\frac{3C_3}{\epsilon}\biggr),
\]
then 
\[
|\rho^N_M(t)|\le\epsilon\quad\text{for $\delta\le t\le1$.}
\]
\end{theorem}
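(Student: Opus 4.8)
The plan is to split the finite-sum relative error $\rho^N_M(t)$ into the infinite-sum relative error $\rho(t)$ plus the two omitted tails, and to bound each of the three pieces by $\epsilon/3$ using the results already established. From \eqref{eq: infinite rel err} and \eqref{eq: rho M N},
\[
\rho^N_M(t)=\rho(t)+\frac{t^\beta}{\Gamma(\beta)}\sum_{n=-\infty}^{-M-1}w_ne^{-a_nt}
	+\frac{t^\beta}{\Gamma(\beta)}\sum_{n=N+1}^{\infty}w_ne^{-a_nt},
\]
so by the triangle inequality it suffices to show that each term on the right is at most $\epsilon/3$; this is where the constant $3$ in the bounds $3C_1/\epsilon$, $3C_2/\epsilon$, $3C_3/\epsilon$ originates.

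First I would apply Theorem~\ref{thm: Q(f) error}: since $0<t\le1$, $|\rho(t)|\le C_1 e^{-2\pi r/h}$, and $C_1e^{-2\pi r/h}\le\epsilon/3$ is equivalent to $1/h\ge(2\pi r)^{-1}\log(3C_1/\epsilon)$, the first stated condition. For the two tails I would use Lemma~\ref{lem: trunc} together with the identity $hf(nh)=w_ne^{-a_nt}$, which turns $\tfrac{h}{\Gamma(\beta)}\sum f(nh)$ into $\tfrac1{\Gamma(\beta)}\sum w_ne^{-a_nt}$. For the left tail, multiplying \eqref{eq: Mh} by $t^\beta$ and using $t^\beta\le1$ (valid because $0<t\le1$ and $\beta>0$) gives $\tfrac{t^\beta}{\Gamma(\beta)}\sum_{n\le-M-1}w_ne^{-a_nt}\le C_2\exp(-\beta e^{Mh})$; demanding this to be $\le\epsilon/3$ yields $e^{Mh}\ge\beta^{-1}\log(3C_2/\epsilon)$, i.e.\ the stated condition on $M$. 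For the right tail, multiplying \eqref{eq: Nh} by $t^\beta$ cancels the factor $t^{-\beta}$, and the bound $t\ge\delta$ replaces $\exp(-\theta te^{Nh-1})$ by $\exp(-\theta\delta e^{Nh-1})$, so $\tfrac{t^\beta}{\Gamma(\beta)}\sum_{n\ge N+1}w_ne^{-a_nt}\le C_3\exp(-\theta\delta e^{Nh-1})$; demanding this to be $\le\epsilon/3$ yields $Nh\ge1+\log\bigl(\theta^{-1}\delta^{-1}\log(3C_3/\epsilon)\bigr)$, i.e.\ (after dividing by $h$) the stated condition on $N$. Adding the three estimates then gives $|\rho^N_M(t)|\le\epsilon$ for $\delta\le t\le1$.

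The one point that needs care — and the reason the hypothesis "sufficiently small $\epsilon$" is imposed — is that Lemma~\ref{lem: trunc} applies only once $Mh$ clears the fixed threshold $\log(\beta^{-1}-1)$ (or $0$ when $\beta\ge1/2$) and $Nh$ clears $1+\log(\beta t^{-1})\le1+\log(\beta\delta^{-1})$, and that the iterated logarithms $\log(\beta^{-1}\log(3C_2/\epsilon))$ and $\log(\theta^{-1}\delta^{-1}\log(3C_3/\epsilon))$ are positive and well defined. All of this is automatic for small $\epsilon$, since each iterated logarithm tends to $+\infty$ as $\epsilon\to0$, so the stated lower bounds already push $Mh$ and $Nh$ past the respective thresholds; moreover the first condition forces $1/h$ large, which is what lets one pass from the inequality on $Nh$ to the displayed bound on $N$.

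There is no serious obstacle here: the analytic work has been done in Theorem~\ref{thm: Q(f) error} and Lemma~\ref{lem: trunc}, and the present statement is an assembly step. The only mildly delicate points are keeping straight which estimates are valid on $(0,1]$ versus on $[\delta,1]$, correctly absorbing the $t^{\pm\beta}$ factors, and verifying that the side conditions of Lemma~\ref{lem: trunc} become vacuous once $\epsilon$ is small — none of which is difficult.
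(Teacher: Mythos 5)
Your proposal is correct and follows essentially the same route as the paper: split $\rho^N_M(t)$ into the infinite-sum relative error plus the two omitted tails, bound each of the three pieces by $\epsilon/3$ using Theorem~\ref{thm: Q(f) error} and Lemma~\ref{lem: trunc} (absorbing the $t^{\pm\beta}$ factors exactly as you do), and note that the side conditions of the lemma are met automatically for sufficiently small $\epsilon$. The only quibble is your closing remark about $N$: the requirement you correctly derive, $Nh\ge1+\log\bigl(\tfrac{1}{\theta\delta}\log\tfrac{3C_3}{\epsilon}\bigr)$, corresponds to $N\ge\tfrac1h+\tfrac1h\log(\cdots)$ rather than the stated $N\ge1+\tfrac1h\log(\cdots)$, and a large $1/h$ makes this gap wider, not narrower --- but the paper's one-line proof elides exactly the same point, so this does not distinguish your argument from the original.
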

\begin{proof}
The inequalities for $h$, $M$~and $N$ imply that each of
$C_1e^{-2\pi r/h}$, $C_2\exp(-\beta e^{Mh})$ and 
$C_3\exp(-\theta te^{Nh-1})$ is bounded 
above by~$\epsilon t^{-\beta}/3$, so the error estimate is a 
consequence of Theorem~\ref{thm: Q(f) error}, Lemma~\ref{lem: trunc} 
and the triangle inequality.  Note that the restrictions on $M$~and 
$N$ in \eqref{eq: Mh}~and \eqref{eq: Nh} will be satisfied 
for~$\epsilon$ sufficiently small.
\end{proof}

\begin{figure}
\caption{The relative error for the initial approximation from 
Example~\ref{ex: new}.}
\label{fig: new rel err}
\begin{center}
\includegraphics[scale=0.5]{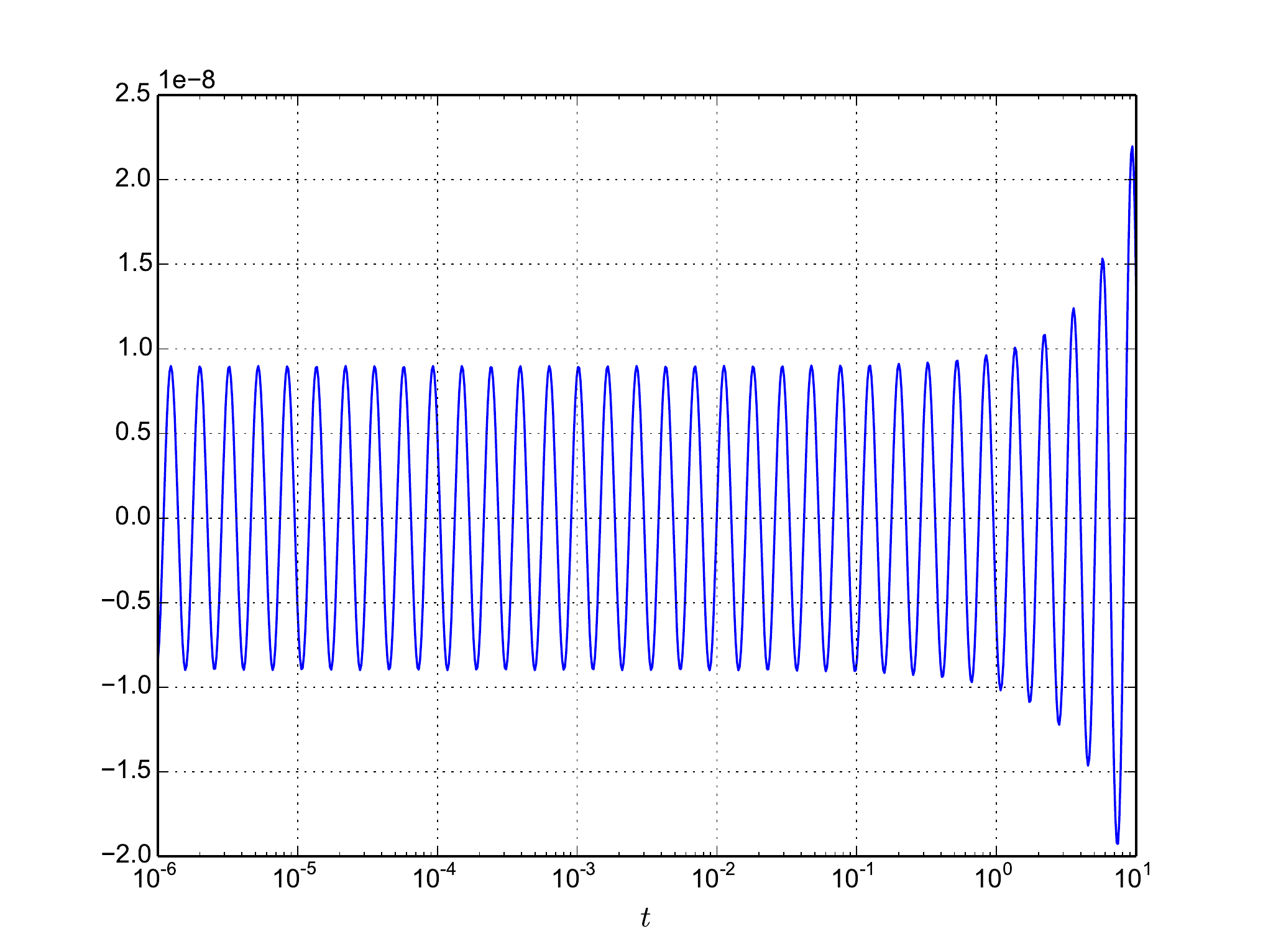}
\end{center}
\end{figure}

Although the error bounds above require $t\in[\delta,1]$, a simple
rescaling allows us to treat a general compact 
subinterval~$[\delta,T]$.  If $\check a_n=a_n/T$~and $\check 
w_n=w_n/T^\beta$, then 
\[
\frac{1}{t^\beta}=\frac{1}{T^\beta}\,\frac{1}{(t/T)^\beta}
	\approx\frac{1}{\Gamma(\beta)}
	\sum_{n=-M}^N\check w_ne^{-\check a_nt}
\]
for $\delta\le t/T\le1$, or in other words 
for~$\delta\cdot T\le t\le T$.  Moreover, the relative error 
$\check\rho^N_M(t)=\rho^N_M(t/T)$ is unchanged by the rescaling.

\begin{example}\label{ex: new}
We took the same values for $\beta$, $\delta$, $T$, $\epsilon$,
$\epsilon_{\text{RD}}$~and $\epsilon_{\text{RT}}$ as in 
Example~\ref{ex: Prony}.  Since the constant~$C_1$ of 
Theorem~\ref{thm: Q(f) error} is difficult to estimate, we again
used~\eqref{eq: rel discr error} to choose $h=0.47962$.  Likewise,
the constant~$C_3$ in Lemma~\ref{lem: trunc} is difficult to 
estimate, so we chose $N=\lceil h^{-1}x_{\delta/T}\rceil=40$.  
However, knowing $C_2=2e/\Gamma(\beta+1)$ we easily determined that
$C_2\exp(-\beta e^{Mh})\le\epsilon_{\text{RT}}$ for~$M=8$.
The exponents and weights~\eqref{eq: new exponents and weights} were 
computed for the interval~$[\delta/T,1]$, and then rescaled as above
to create an approximation for the interval~$[\delta,T]$ with 
$M+1+N=49$~terms and a relative error whose magnitude is at worst 
$2.2\times10^{-8}$. 

The behaviour of the relative error~$\rho^N_M(t)$, shown in
Figure~\ref{fig: new rel err}, suggests a modified strategy: construct 
the approximation for~$[\delta,10T]$ but use it only on~$[\delta,T]$.  
We found that doing so required $N=45$, that is, 5 additional terms, 
but resulted in a nearly uniform amplitude for the relative error of 
about $0.97\times10^{-8}$.  Finally, after applying Prony's method 
with $L=17$~and $K=6$ we were able to reduce the number of terms 
from~$M+1+N=54$ to~$43$ without increasing the relative error.
\end{example}

To compare these results with those of Li~\cite{Li2010}, let 
$0<\alpha<1$ and let
$k(t)=t^{\alpha-1}/\Gamma(\alpha)$ denote the kernel for the 
fractional integral of order~$\alpha$.  Taking $\beta=1-\alpha$ we 
compute the weights~$w_l$ and exponents~$a_l$ as above and define 
\[
k_M^N(t)=\frac{1}{\Gamma(\alpha)\Gamma(1-\alpha)}\sum_{n=-M}^N
	w_ne^{-a_nt}\quad\text{for $\delta\le t\le T$.}
\]
The fast algorithm evaluates
\[
(\mathcal{K}_M^NU)^n=\int_0^{t_{n-1}} k^N_M(t-s)\tilde U(s)\,ds
	+\int_{t_{n-1}}^{t_n}k(t_n-s)\tilde U(s)\,ds
\]
and our bound $|\rho^N_M(t)|\le\epsilon$ implies 
that $|k_M^N(t)-k(t)|\le\epsilon t^{\alpha-1}/\Gamma(\alpha)$
for $\delta\le t\le T$, so
\[
\bigl|(\mathcal{K}_M^NU)^n-(\mathcal{K}\tilde U)(t_n)\bigr|
	\le\epsilon\int_0^{t_{n-1}}
		\frac{(t_n-s)^{\alpha-1}}{\Gamma(\alpha)}\,|\tilde U(s)|\,ds
	\le\frac{\epsilon t_n^\alpha}{\Gamma(\alpha+1)}\,
		\max_{1\le j\le n}|U^j|,
\]
provided $\Delta t_n\ge\delta$~and $t_n\le T$.  Similarly, the method 
of Li yields $(\mathcal{K}_QU)^n$ but with a bound for 
the \emph{absolute} error in~\eqref{eq: Li quad}, so that
$|k_Q(t)-k(t)|\le\epsilon'$ for~$\delta'\le t<\infty$.  Thus,
\[
\bigl|(\mathcal{K}_QU)^n-(\mathcal{K}\tilde U)(t_n)\bigr|
	\le\epsilon'\,\frac{\sin\pi\alpha}{\pi}\int_0^{t_{n-1}}
		\,|\tilde U(s)|\,ds
	\le\epsilon't_n\,\frac{\sin\pi\alpha}{\pi}\,
	\max_{1\le j\le n}|U^j|,
\]
provided $\Delta t_n\ge\delta$.  Li~\cite[Fig.~3~(d)]{Li2010} 
required about $Q=250$~points to achieve an (absolute) 
error~$\epsilon'\le10^{-6}$ for~$t\ge\delta'=10^{-4}$ 
when~$\alpha=1/4$ (corresponding to~$\beta=1-\alpha=3/4$).  In 
Examples 1~and 2, our methods give a smaller 
error~$\epsilon\le10^{-8}$ using only $M+1+N=43$~terms with a
less restrictive lower bound for the time step, $\delta=10^{-6}$.  
Against these advantages, the method of Li permits arbitrarily 
large~$t_n$.
\section{Conclusion}
Comparing Examples \ref{ex: Prony}~and \ref{ex: new}, we see 
that, for comparable accuracy, the approximation based on the second 
substitution results in far fewer terms because we are able to use a 
much smaller choice of~$M$.  However, after applying Prony's method 
both approximations are about equally efficient.  If Prony's method 
is not used, then the second approximation is clearly superior. 
Another consideration is that the first approximation has more 
explicit error bounds so we can more easily determine suitable choices 
of $h$, $M$~and $N$ to achieve a desired accuracy.

\end{document}